\documentclass{svjour3}
\usepackage{amssymb, amsfonts, epsfig, color, amscd, setspace, lineno}


\title{Limit set intersection theorems for Kleinian groups and a conjecture of Susskind}
\author{James W. Anderson}
\institute{Mathematical Sciences\\University of Southampton\\Southampton SO17 1BJ\\England\\ \email{j.w.anderson@soton.ac.uk}}
\date{}

\begin{document}

\maketitle



\section{Introduction, history and definitions}
\label{introduction}

We continue here the investigation of the relationship between the intersection of a pair of subgroups of a Kleinian group, and in particular the limit set of that intersection, and the intersection of the limit sets of the subgroups.   Of specific interest is the extent to which the intersection of the limit sets being non-empty implies that the intersection of the subgroups is non-trivial.   

The main purpose of this note is to consider the conjecture, due to Susskind, given below as Conjecture 0.  

\medskip
\noindent
{\bf Conjecture 0 (Susskind)} {\em Let $\Gamma$ be a non-elementary Kleinian group acting on ${\mathbb H}^n$ for some $n\ge 2$.  Let $\Phi$ and $\Theta$ be non-elementary  subgroups of $\Gamma$.  We then have that $\Lambda_c(\Phi)\cap\Lambda_c(\Theta)\subseteq  \Lambda(\Phi\cap\Theta)$.}
\medskip

We are unfortunately not able at this point to give a complete proof of Conjecture 0.  We are able to show that Conjecture 0 holds in a particular case, given below as Proposition \ref{conical and uniform conical}.    We then use a result of Bishop and Jones \cite{bishop jones} to conclude that in fact, Conjecture 0 holds most of the time.

\medskip
\noindent
{\bf Proposition \ref{conical and uniform conical}} {\em Let $\Gamma$ be a non-elementary purely loxodromic Kleinian group acting on ${\mathbb H}^n$ for some $n\ge 2$ and let $\Phi$ and $\Theta$ be non-trivial subgroups of $\Gamma$.  We then have that $\Lambda_c(\Phi)\cap \Lambda_c^u (\Theta)\subseteq \Lambda_c (\Phi\cap\Theta)$.}

\medskip
In addition, we provide examples to show that Conjecture 0 is as reasonably sharp as it can be; these are given below as Propositions \ref{fuchsian most general counterexample} and \ref{grid example}.

\medskip
\noindent
{\bf Proposition \ref{fuchsian most general counterexample}} {\em For every finitely generated purely loxodromic Fuchsian group $\Gamma$ with $\Lambda(\Gamma) ={\mathbb S}^1_\infty$, there exist non-elementary subgroups $\Phi$ and $\Theta$ of $\Gamma$ so that $\Phi\cap \Theta$ is trivial and $\Lambda(\Phi)\cap \Lambda(\Theta)$ contains exactly two points.  In particular, we have that $\Lambda(\Phi\cap \Theta)\ne \Lambda(\Phi)\cap \Lambda(\Theta)$ for this $\Phi$ and $\Theta$.}

\medskip
\noindent
{\bf Proposition \ref{grid example}} {\em There exists a non-elementary purely loxodromic Fuchsian group $\Gamma$ and non-trivial subgroups $\Phi$ and $\Theta$ of $\Gamma$ so that $\Lambda(\Phi\cap\Theta) \setminus \Lambda_c(\Phi)\cap\Lambda_c(\Theta)$ is non-empty.}

\medskip
In light of Proposition \ref{grid example}, we cannot expect that the inclusion at the conclusion of Conjecture 0 is in fact an equality in general. 

\medskip
We begin with some definitions. A {\em Kleinian group} $\Gamma$ is a discrete subgroup of the group ${\rm Isom}^+({\mathbb H}^n)$ of the orientation-preserving isometries of the real hyperbolic space ${\mathbb H}^n$ for some $n\ge 2$.  (In the case that $n=2$, we will follow historical convention and refer to $\Gamma$ as a {\em Fuchsian group}.)  There are a number of references for basic facts about Kleinian groups, including Maskit \cite{maskit book}, Kapovich \cite{kapovich}, Matsuzaki and Taniguchi \cite{matsuzaki taniguchi}, and Ratcliffe \cite{ratcliffe}.

For a loxodromic element $\gamma$ of ${\rm Isom}^+({\mathbb H}^n)$, we let ${\rm fix}(\gamma)$ denote the pair of {\em fixed points} for the action of $\gamma$ on ${\mathbb S}^{n-1}_\infty$ and ${\rm axis}(\gamma)$  the {\em axis} of $\gamma$, which is the hyperbolic line in ${\mathbb H}^n$ whose endpoints at infinity in ${\mathbb S}^{n-1}_\infty$ are the two points of ${\rm fix}(\gamma)$ and which consequently is invariant under the action of $\langle \gamma\rangle$.   

For ease of exposition, we restrict our attention exclusively to {\em purely loxodromic} Kleinian groups, so that we are excluding those Kleinian groups that contain parabolic or elliptic elements, and we tailor our definitions and the statements of results accordingly.    We comment here that many of the extant results referenced in this note apply to general Kleinian groups, and we refer the interested reader to the references given for the most general statements.  Where relevant, we will remark on when the results proved in this note extend to general Kleinian groups.

The action of a Kleinian group $\Gamma$ on the sphere at infinity ${\mathbb S}^{n-1}_\infty$ of ${\mathbb H}^n$ partitions ${\mathbb S}^{n-1}_\infty$ into the {\em domain of discontinuity} $\Omega(\Gamma)$, which is the largest (possibly empty) open subset of ${\mathbb S}^{n-1}_\infty$ on which $\Gamma$ acts properly discontinuously, and the {\em limit set} $\Lambda(\Gamma)$, which is the closure in ${\mathbb S}^{n-1}_\infty$ of the set of fixed points of loxodromic elements of $\Gamma$.  

A Kleinian group is {\em non-elementary} if it is not virtually abelian, and is {\em elementary} otherwise.  The limit set of a non-elementary Kleinian group is a perfect set, and hence is uncountable.    A non-trivial purely loxodromic Kleinian group is elementary if and only if it is loxodromic cyclic.

For a non-elementary Kleinian group $\Gamma$ acting on ${\mathbb H}^3$, we have the following additional structure on its domain of discontinuity.    Assuming that its domain of discontinuity $\Omega(\Gamma)\subset {\mathbb S}^2_\infty$ is non-empty, each (connected) component $\Delta$ of $\Omega(\Gamma)$ then has the open unit disc ${\bf D}\subset {\bf C}$ as its universal cover.  The action of the {\em component stabilizer} $\Gamma_\Delta = \{ \gamma\in\Gamma \: |\: \gamma(\Delta) = \Delta\}$ of $\Delta$ in $\Gamma$ lifts to the action of a group of conformal homeomorphisms of ${\bf D}$, which are isometries of the natural hyperbolic metric on ${\bf D}$.  Hence, we see that $\Delta$ admits a canonical hyperbolic metric for which $\Gamma_\Delta$ acts by isometries, and so all of $\Omega(\Gamma)$ admits a canonical hyperbolic metric for which $\Gamma$ acts by isometries.  We refer to this as the {\em Poincar\'e metric} on $\Omega(\Gamma)$.   

However, this is specific for Kleinian groups acting on ${\mathbb H}^3$, and does not hold for general $n\ge 4$.  In higher dimensions, so for a Kleinian group $\Gamma$ acting on ${\mathbb H}^n$ for some $n\ge 4$, there is a natural conformal structure on the domain of discontinuity $\Omega(\Gamma)\subset {\mathbb S}^{n-1}_\infty$, but unfortunately this conformal structure does not play the same role as the Poincar\'e metric.  For information on this, we refer the interested reader to Apanasov \cite{apanasov}.  

A non-elementary Kleinian group $\Gamma$ acting on ${\mathbb H}^3$ with non-empty domain of discontinuity $\Omega(\Gamma)$ is {\em analytically finite} if the quotient surface $\Omega(\Gamma)/\Gamma$ has finite area in the Poincar\'e metric.  It is a fundamental theorem of Ahlfors that all finitely generated Kleinian groups acting on ${\mathbb H}^3$ with non-empty domain of discontinuity are necessarily analytically finite.  The converse holds for Fuchsian groups, but there are examples of Kleinian groups acting on ${\mathbb H}^3$ which show that the converse is false in general, in that there are infinitely generated Kleinian groups acting on ${\mathbb H}^3$ for which the quotient surface $\Omega(\Gamma)/\Gamma$ has finite area in its Poincar\'e metric.   We refer the reader to Kapovich \cite{kapovich} for a discussion of the failure of variants of the Ahlfors finiteness theorem to Kleinian groups acting on ${\mathbb H}^n$ for some $n\ge 4$. 

The definition we give above of the limit set is one of several characterizations of the limit set, all of which are equivalent for non-elementary Kleinian groups acting on ${\mathbb H}^n$ for some $n\ge 2$; we use it as our definition as it is the most useful of these equivalent characterizations for the approach we take herein.

The limit set has several useful properties that follow immediately from the definition, which we bring together in the following Lemma for ease of reference.

\begin{lemma}  Let $\Gamma$ be a non-elementary Kleinian group acting on ${\mathbb H}^n$ for some $n\ge 2$.  We then have the following:
\begin{enumerate}
\item If $\Phi\subseteq\Gamma$, then $\Lambda(\Phi)\subseteq\Lambda(\Gamma)$; 
\item If $\Phi$ is a finite index subgroup of $\Gamma$, then $\Lambda(\Phi) =\Lambda(\Gamma)$;
\item If $\Phi$ is a non-trivial normal subgroup of $\Gamma$, then $\Lambda(\Phi) = \Lambda(\Gamma)$.
\end{enumerate}
\label{limit set properties}
\end{lemma}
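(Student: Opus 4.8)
The plan is to handle the three parts in sequence, with the first following immediately from the definition and the remaining two building on it. Throughout I work in the purely loxodromic setting, so that every non-trivial element is loxodromic, and I use the definition of $\Lambda(\cdot)$ as the closure in ${\mathbb S}^{n-1}_\infty$ of the set of fixed points of loxodromic elements, together with the standard conjugation formula ${\rm fix}(\gamma\phi\gamma^{-1}) = \gamma({\rm fix}(\phi))$ for loxodromic $\phi$.

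For part (1) I would argue directly: since $\Phi\subseteq\Gamma$, every loxodromic element of $\Phi$ is a loxodromic element of $\Gamma$, so the set of fixed points of loxodromic elements of $\Phi$ is contained in that for $\Gamma$; taking closures gives $\Lambda(\Phi)\subseteq\Lambda(\Gamma)$. For part (2), part (1) supplies one inclusion, so it remains to show $\Lambda(\Gamma)\subseteq\Lambda(\Phi)$. Here the key observation is that if $\Phi$ has index $m$ in $\Gamma$ and $\gamma\in\Gamma$ is loxodromic, then among the $m+1$ cosets $\gamma\Phi,\gamma^2\Phi,\dots,\gamma^{m+1}\Phi$ two must coincide, whence $\gamma^k\in\Phi$ for some $k\ge 1$; as $\gamma^k$ is loxodromic with ${\rm fix}(\gamma^k)={\rm fix}(\gamma)$, we get ${\rm fix}(\gamma)\subseteq\Lambda(\Phi)$. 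Since this holds for every loxodromic $\gamma\in\Gamma$ and $\Lambda(\Phi)$ is closed, the closure $\Lambda(\Gamma)$ of all such fixed points lies in $\Lambda(\Phi)$.

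Part (3) is the crux. Again part (1) reduces matters to $\Lambda(\Gamma)\subseteq\Lambda(\Phi)$. I would first show $\Lambda(\Phi)$ is $\Gamma$-invariant: for $\gamma\in\Gamma$ and loxodromic $\phi\in\Phi$, normality gives $\gamma\phi\gamma^{-1}\in\Phi$, and since ${\rm fix}(\gamma\phi\gamma^{-1})=\gamma({\rm fix}(\phi))$, the homeomorphism $\gamma$ of ${\mathbb S}^{n-1}_\infty$ permutes the fixed points of loxodromic elements of $\Phi$ and hence preserves their closure, so $\gamma(\Lambda(\Phi))=\Lambda(\Phi)$. It then suffices to show every fixed point of a loxodromic $\gamma\in\Gamma$ lies in $\Lambda(\Phi)$. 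Fixing a loxodromic $\phi\in\Phi$ (which exists as $\Phi$ is non-trivial and $\Gamma$ is purely loxodromic), I let $p^+$ be the attracting fixed point of $\gamma$; since $\phi$ has two distinct fixed points, at least one, say $q$, differs from the repelling fixed point of $\gamma$. The conjugates $\gamma^j\phi\gamma^{-j}\in\Phi$ are loxodromic with $\gamma^j(q)\in\Lambda(\Phi)$, and the north–south dynamics of $\gamma$ give $\gamma^j(q)\to p^+$, so $p^+\in\Lambda(\Phi)$ by closedness; applying the same argument to $\gamma^{-1}$ captures the repelling fixed point. Taking closures yields $\Lambda(\Gamma)\subseteq\Lambda(\Phi)$.

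The main obstacle is the invariance-plus-dynamics step in part (3): one must ensure a loxodromic $\phi\in\Phi$ has a fixed point in general position relative to $\gamma$ so that the iterates $\gamma^j$ genuinely converge to the target fixed point, which is exactly where the non-triviality of $\Phi$ and the purely loxodromic hypothesis enter. Alternatively, this step can be packaged by invoking the characterization of $\Lambda(\Gamma)$ as the unique minimal non-empty closed $\Gamma$-invariant subset of ${\mathbb S}^{n-1}_\infty$, from which the $\Gamma$-invariance and non-emptiness of $\Lambda(\Phi)$ immediately force $\Lambda(\Gamma)\subseteq\Lambda(\Phi)$.
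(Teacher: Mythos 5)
Your proof is correct: part (1) from monotonicity of fixed-point sets under closure, part (2) from the pigeonhole argument giving $\gamma^k\in\Phi$, and part (3) from normality plus the north--south dynamics of a loxodromic $\gamma$ applied to a fixed point of some $\phi\in\Phi$ in general position. The paper itself offers no proof --- it asserts these properties ``follow immediately from the definition'' --- and your argument is exactly the standard one that fills in that assertion, with the purely loxodromic hypothesis correctly identified as what guarantees a non-trivial $\Phi$ contains a loxodromic element in part (3).
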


There are two classes of limit points that are of particular interest to us in this note.  The limit point $x\in \Lambda(\Gamma)$ of the Kleinian group $\Gamma$ is a {\em conical limit point}, or alternatively a {\em point of approximation} or a {\em radial limit point}, if for every ray $r\subset {\mathbb H}^n$ with $x$ as its endpoint at infinity, there exists a compact set $K\subset {\mathbb H}^n$ and a sequence $\{ \gamma_k\}$ of distinct elements of $\Gamma$ so that $\gamma_k(K)\cap r$ is non-empty for all $k$.   However, the intersections $\gamma_k(K)\cap r$ may be very unevenly spaced along $r$.  Denote the set of conical limit points of $\Gamma$ by $\Lambda_c(\Gamma)$.

The limit point $x\in \Lambda(\Gamma)$ is a {\em uniform conical limit point} if for every ray $r\subset {\mathbb H}^n$ with $x$ as its endpoint at infinity, there exists a compact set $K\subset {\mathbb H}^n$ and a sequence $\{ \gamma_k\}$ of distinct elements of $\Gamma$ so that $r\subset \cup_k \gamma_k(K)$.   In particular, the intersections $ \gamma_k(K)\cap r$ are extremely evenly spaced along $r$.  Denote the set of uniform conical limit points by $\Lambda_c^u (\Gamma)$.  

It follows immediately from the definitions that $\Lambda_c^u (\Gamma)\subset \Lambda_c (\Gamma)$.  Also, the fixed points of a loxodromic element $\gamma$ of $\Gamma$ are necessarily contained in $\Lambda_c^u (\Gamma)$, as we may take $r$ to be the ray contained in the hyperbolic line ${\rm axis}(\gamma)$ and the compact set $K$ to be the closed hyperbolic line segment in $r$ between any point of $r$ and its translate by $\gamma^{\pm 1}$. 

If we consider the projection of any such ray $r$ to the quotient $N_\Gamma ={\mathbb H}^n/\Gamma$, then for a conical limit point, there exists a compact set $K$ in $N_\Gamma$ and a sequence $\{ x_k\}$ of points exiting $r$ so that the projection of the $x_k$ to $N_\Gamma$ all lie in $K$, whereas for a uniform conical limit point, the projection of all of $r$ to $N_\Gamma$ lies in a compact subset of $N_\Gamma$. 

A Kleinian group $\Gamma$ is {\em convex co-compact} if $\Lambda(\Gamma) = \Lambda_c (\Gamma)$.   In all dimensions $n\ge 2$, a convex co-compact Kleinian group acting on ${\mathbb H}^n$ is necessarily finitely generated, but the converse only holds for Fuchsian groups.   In dimension $n\ge 3$, there exist purely loxodromic finitely generated Kleinian groups acting on ${\mathbb H}^n$ which are not convex co-compact.  

\medskip

The question of the relationship between $\Lambda(\Phi)\cap\Lambda(\Theta)$ and $\Lambda(\Phi\cap\Theta)$ for subgroups $\Phi$ and $\Theta$ of a Kleinian group has been considered by a number of different authors in a number of different contexts.  We give here a short survey of known results.   We remind the reader that we have adapted the statements to the case of purely loxodromic Kleinian groups, and we refer the interested reader to the original papers for the complete statements. 

Chronologically, the first result of this sort is due to Maskit.

\begin{theorem} [Theorem 3 in Maskit \cite{maskit intersection}] Let $\Gamma$ be a non-elementary purely loxodromic Kleinian group acting on ${\mathbb H}^3$ with non-empty domain of discontinuity $\Omega(\Gamma)$.  Let $\Delta_1$ and $\Delta_2$ be components of $\Omega(\Gamma)$ with corresponding component stabilizers  $\Gamma_k = \{ \gamma\in\Gamma\: |\: \gamma(\Delta_k) =\Delta_k\}$, and assume that both $\Delta_k/\Gamma_k$ have finite area.  We then have that 
\[ \Lambda(\Gamma_1\cap\Gamma_2) = \Lambda(\Gamma_1)\cap\Lambda(\Gamma_2) = \overline{\Delta_1}\cap\overline{\Delta_2}. \]
\label{maskit intersection}
\end{theorem}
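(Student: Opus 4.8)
The plan is to reduce the whole statement to the structure of the two component stabilizers and then to isolate the one genuinely nontrivial containment.

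First I would analyze each $\Gamma_k$ as a group acting on the disc $\Delta_k$ with its Poincar\'e metric. Since $\Gamma$ is purely loxodromic, any nontrivial $\gamma\in\Gamma_k$ stabilizes $\Delta_k$ and acts on it as a hyperbolic isometry, so its two fixed points ${\rm fix}(\gamma)$ lie on the frontier $\partial\Delta_k$; in particular $\Gamma_k$ contains no parabolic or elliptic elements. A finite-area Fuchsian group with no parabolics is cocompact, so $\Delta_k/\Gamma_k$ is a closed surface and $\Gamma_k$ acts cocompactly on $\Delta_k$. Realized inside ${\rm Isom}^+(\mathbb H^3)$ this exhibits $\Gamma_k$ as a quasi-Fuchsian, hence convex co-compact, group with $\Lambda(\Gamma_k)=\Lambda_c(\Gamma_k)=\Lambda_c^u(\Gamma_k)=\partial\Delta_k$, the last being a Jordan curve. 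This already settles the second equality: distinct components are disjoint open sets whose frontiers lie in $\Lambda(\Gamma)$, so for $\Delta_1\ne\Delta_2$ we get $\overline{\Delta_1}\cap\overline{\Delta_2}=\partial\Delta_1\cap\partial\Delta_2=\Lambda(\Gamma_1)\cap\Lambda(\Gamma_2)$. One containment in the first equality is also immediate: since $\Gamma_1\cap\Gamma_2\subseteq\Gamma_k$, part (1) of Lemma \ref{limit set properties} gives $\Lambda(\Gamma_1\cap\Gamma_2)\subseteq\Lambda(\Gamma_1)\cap\Lambda(\Gamma_2)$.

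The entire content therefore collapses to the reverse containment $\partial\Delta_1\cap\partial\Delta_2\subseteq\Lambda(\Gamma_1\cap\Gamma_2)$. For this I would work with the convex hulls $C_k=\mathrm{Hull}(\partial\Delta_k)\subset\mathbb H^3$, each closed, convex, $\Gamma_k$-invariant, with $C_k/\Gamma_k$ compact by convex co-compactness. Writing $H=\Gamma_1\cap\Gamma_2$ and $Y=C_1\cap C_2$, the set $Y$ is closed, convex and $H$-invariant. The plan is to establish the two facts: (a) $Y/H$ is compact, and (b) the ideal boundary $\partial_\infty Y$ equals $\partial\Delta_1\cap\partial\Delta_2$ (the inclusion $\partial_\infty Y\subseteq\partial_\infty C_1\cap\partial_\infty C_2=\partial\Delta_1\cap\partial\Delta_2$ being automatic). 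Granting these, any $\xi\in\partial\Delta_1\cap\partial\Delta_2=\partial_\infty Y$ is approached by points $y_n\in Y$; writing $y_n=h_n(z_n)$ with $z_n$ in a compact fundamental region for the $H$-action on $Y$ and $h_n\in H$, the orbit points $h_n(z_0)$ accumulate at $\xi$, whence $\xi\in\Lambda(H)$, which finishes the argument.

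The main obstacle is (a), the compactness of $Y/H$; equivalently, upgrading the separate recurrences supplied by $C_1/\Gamma_1$ and $C_2/\Gamma_2$ into a single recurrence for $H$. A point $q\in Y$ lies within bounded distance of both $\Gamma_1 p$ and $\Gamma_2 p$, but the two approximating group elements need not agree, and coordinating them to produce an element of $H=\Gamma_1\cap\Gamma_2$ is precisely the difficulty, the point at which the geometric finiteness of \emph{both} factors must be used quantitatively (in the general setting this is the Susskind--Swarup phenomenon for geometrically finite groups). In the present purely planar situation I would expect this to be handled instead through the topology of the components: $\partial\Delta_1$ and $\partial\Delta_2$ are Jordan curves bounding the disjoint discs $\Delta_1,\Delta_2$, which severely constrains $\partial\Delta_1\cap\partial\Delta_2$, and one can then try to manufacture loxodromic elements of $H$ with fixed points dense in $\partial\Delta_1\cap\partial\Delta_2$ by a ping-pong construction; the recurring obstacle is guaranteeing that the elements produced lie in both stabilizers simultaneously. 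I would also separately check the borderline configurations in which $\partial\Delta_1\cap\partial\Delta_2$ is empty or a single point, verifying these are consistent with $\Lambda(H)$ being empty or degenerate accordingly.
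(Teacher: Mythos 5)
This statement is not proved in the paper at all: it is quoted as background from Maskit \cite{maskit intersection}, so there is no in-paper argument to measure you against, and your proposal has to stand on its own. It does not. The first genuine error is the structural claim that each $\Gamma_k$ is ``quasi-Fuchsian, hence convex co-compact'' with $\Lambda(\Gamma_k)=\partial\Delta_k$ a Jordan curve. A component $\Delta_k$ of $\Omega(\Gamma)$ need not be simply connected (components of function groups and of Klein--Maskit combinations are typically infinitely connected), so $\partial\Delta_k$ need not be a Jordan curve; and even when $\Delta_k/\Gamma_k$ is a closed surface, cocompactness of the action on a component of the domain of discontinuity does not control the action on ${\mathbb H}^3$: a purely loxodromic totally degenerate surface group acts cocompactly on its single simply connected component and yet is not convex co-compact, and for it $\Lambda_c\ne\Lambda$. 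The inclusion $\Lambda(\Gamma_k)\subseteq\partial\Delta_k$ is indeed automatic (the limit set lies in the closure of the $\Gamma_k$-orbit of any point of $\Delta_k$), but the reverse inclusion $\partial\Delta_k\subseteq\Lambda(\Gamma_k)$, which your ``second equality'' silently uses, is itself a nontrivial preliminary theorem whose proof is exactly where the finite-area hypothesis enters; it cannot be extracted from a quasi-Fuchsian picture that is not there.

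The second and decisive problem is that you explicitly leave open the only containment with any content, namely $\overline{\Delta_1}\cap\overline{\Delta_2}\subseteq\Lambda(\Gamma_1\cap\Gamma_2)$. Your reduction to this single inclusion is correct, but the convex-hull scheme stalls precisely where you say it does: nothing in the proposal produces even one nontrivial element of $\Gamma_1\cap\Gamma_2$, let alone an orbit accumulating at an arbitrary point of $\partial\Delta_1\cap\partial\Delta_2$, and the proposed step (a) (cocompactness of $Y/H$) both presupposes the convex co-compactness that fails above and is stronger than what the theorem asserts. Producing elements of the intersection from the coincidence of boundary points is the whole theorem --- it is the same difficulty that drives Proposition \ref{conical and uniform conical} and Conjecture 0 in this paper --- and Maskit's actual argument attacks it with planar topology and the finite-area hypothesis on both quotients, not with convex hulls. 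As written, the proposal establishes only the trivial inclusions and is not a proof.
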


We note that by the Ahlfors finiteness theorem, the hypothesis that both $\Delta_k/\Gamma_k$ have finite area is always satisfied for finitely generated $\Gamma$, and (as has already been noted) there are examples of infinitely generated $\Gamma$ for which this hypothesis is also satisfied.  

In the same paper, Maskit also shows (see Theorem 4 of \cite{maskit intersection}), using similar techniques to those used in the proof of Theorem \ref{maskit intersection} (and with the same hypotheses), that if $\gamma\in\Gamma$ is a loxodromic element for which ${\rm fix}(\gamma)\cap \Lambda(\Gamma_k)$ is non-empty, then there exists an integer $n_\gamma >0$ so that $\gamma^{n_\gamma}\in \Gamma_k$.  That is, since 
\[ {\rm fix}(\gamma)\Lambda(\Gamma_k) = \Lambda(\langle \gamma\rangle)\cap \Lambda(\Gamma_k) = \Lambda(\langle \gamma\rangle \cap\Gamma_k) \]
is non-empty, we have that $\langle\gamma\rangle\cap\Gamma_k$ is non-trivial.

The next several results we state also hold specifically for Kleinian groups acting on ${\mathbb H}^3$. 

\begin{theorem} [Theorems 4.1 and 5.2 of Anderson \cite{anderson analytically geometrically}] Let $\Gamma$ be a non-elementary purely loxodromic Kleinian group acting on ${\mathbb H}^3$ with non-empty domain of discontinuity $\Omega(\Gamma)$.  Let $\Phi$ and $\Theta$ be non-trivial subgroups of $\Gamma$, where $\Phi$ is convex co-compact (possibly loxodromic cyclic) and $\Theta$ is non-elementary and analytically finite.  We then have that $\Lambda(\Phi\cap\Theta) = \Lambda(\Phi)\cap\Lambda(\Theta)$. 
\label{anderson analytically geometrically}
\end{theorem}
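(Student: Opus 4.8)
The inclusion $\Lambda(\Phi\cap\Theta)\subseteq\Lambda(\Phi)\cap\Lambda(\Theta)$ is immediate from part (1) of Lemma~\ref{limit set properties}, since $\Phi\cap\Theta$ is a subgroup of each of $\Phi$ and $\Theta$. All of the work lies in the reverse inclusion $\Lambda(\Phi)\cap\Lambda(\Theta)\subseteq\Lambda(\Phi\cap\Theta)$, and the plan is to produce, for each $x\in\Lambda(\Phi)\cap\Lambda(\Theta)$, a sequence of distinct elements of $\Phi\cap\Theta$ whose translates of a fixed basepoint $o\in{\mathbb H}^3$ accumulate at $x$.

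The starting point is that convex co-compactness of $\Phi$ gives $\Lambda(\Phi)=\Lambda_c(\Phi)$, so every such $x$ is already a conical limit point of $\Phi$. I would fix a geodesic ray $r$ ending at $x$ and, using conicality together with the fact that $\Phi$ acts cocompactly on the convex hull of $\Lambda(\Phi)$, extract a sequence $\{\phi_k\}\subset\Phi$ and a compact set $K\subset{\mathbb H}^3$ so that $\phi_k^{-1}$ pulls a divergent sequence of points of $r$ back into $K$ while keeping $\phi_k^{-1}(o)$ within bounded distance of $r$. In other words, the orbit of $r$ shadows $r$ uniformly on the $\Phi$ side.

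The core of the argument is a compactness-and-discreteness mechanism of Susskind--Swarup type. When $x$ is also a conical limit point of $\Theta$, take a matching sequence $\{\theta_j\}\subset\Theta$ shadowing the same ray $r$. For indices chosen so that $\theta_j(q_j)$ and $\phi_k(p_k)$ lie close together on $r$, the element $\phi_k^{-1}\theta_j$ moves a point of a fixed compact set a uniformly bounded amount; by discreteness of $\Gamma$ only finitely many distinct such elements can occur, so the pigeonhole principle forces coincidences $\phi_k^{-1}\theta_j=\phi_{k'}^{-1}\theta_{j'}$. Each coincidence produces a nontrivial element $\phi_{k'}\phi_k^{-1}=\theta_{j'}\theta_j^{-1}\in\Phi\cap\Theta$, and by letting the matched indices march out along $r$ one obtains a family of such elements whose translates of $o$ accumulate at $x$, giving $x\in\Lambda(\Phi\cap\Theta)$. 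Because $\Gamma$ is purely loxodromic there are no parabolic fixed points to obstruct this, which is why one can hope for full equality here rather than equality only up to common parabolic points.

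The main obstacle, and the place where analytic finiteness of $\Theta$ must enter essentially, is the possibility that $x\in\Lambda(\Theta)\setminus\Lambda_c(\Theta)$. Were $\Theta$ geometrically finite it would, being purely loxodromic, be convex co-compact, whence $\Lambda(\Theta)=\Lambda_c(\Theta)$ and the previous step would finish the argument; the point of the weaker hypothesis is that analytic finiteness permits degenerate ends of ${\mathbb H}^3/\Theta$, and hence non-conical limit points at which no shadowing sequence $\{\theta_j\}$ need exist. To treat such $x$ I would bring in the finite-area geometry of $\Omega(\Theta)/\Theta$ in the Poincar\'e metric, together with Maskit's component-stabilizer result (Theorem~\ref{maskit intersection}), in order to show that a conical limit point of the convex co-compact group $\Phi$ can meet a non-conical limit point of $\Theta$ only when it already lies in the closure $\overline{\Delta}$ of a finite-area component $\Delta$ whose stabilizer meets $\Phi$ nontrivially. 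Reducing the general analytically finite $\Theta$ to these tractable configurations is the step I expect to demand the most care.
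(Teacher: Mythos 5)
First, a point of orientation: the paper does not prove Theorem \ref{anderson analytically geometrically} at all --- it is quoted from Anderson \cite{anderson analytically geometrically} as one item in the survey of known results in Section \ref{introduction} --- so there is no in-paper proof to compare against, and your argument has to be judged on its own terms. The easy inclusion and the case $x\in\Lambda_c(\Theta)$ are fine. Indeed, convex co-compactness of $\Phi$ gives you more than $\Lambda(\Phi)=\Lambda_c(\Phi)$: a ray ending at $x\in\Lambda(\Phi)$ eventually lies in a bounded neighbourhood of the convex hull of $\Lambda(\Phi)$, on which $\Phi$ acts cocompactly, so every point of $\Lambda(\Phi)$ is in fact a \emph{uniform} conical limit point of $\Phi$. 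Your matching-and-pigeonhole step is then exactly the mechanism of Proposition \ref{conical and uniform conical}, with the roles of $\Phi$ and $\Theta$ interchanged. Be aware that the uniformity on one side is essential: for two merely conical sequences the returns to a compact set need not synchronize along $r$, and the unqualified assertion $\Lambda_c(\Phi)\cap\Lambda_c(\Theta)\subseteq\Lambda(\Phi\cap\Theta)$ is precisely Conjecture 0, which is open.

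The genuine gap is the case $x\in\Lambda(\Theta)\setminus\Lambda_c(\Theta)$, which is the entire content of the theorem beyond the convex co-compact setting of Theorem \ref{susskind swarup}, and which you defer rather than prove. The proposed reduction --- that a conical limit point of $\Phi$ meeting $\Lambda(\Theta)\setminus\Lambda_c(\Theta)$ must lie in $\overline{\Delta}$ for a finite-area component $\Delta$ of $\Omega(\Theta)$ whose stabilizer already meets $\Phi$ nontrivially --- is itself an unproven claim of essentially the same depth as the theorem, and it is not clear as stated: $\Lambda(\Theta)$ may contain residual limit points lying in the closure of no single component of $\Omega(\Theta)$; Maskit's Theorem \ref{maskit intersection} compares two component stabilizers of one group and does not directly govern the interaction of a convex co-compact $\Phi$ with a component stabilizer of $\Theta$; and even granting the reduction, you do not explain how $x\in\overline{\Delta}$ together with $\Phi\cap{\rm stab}_\Theta(\Delta)$ being nontrivial yields $x\in\Lambda(\Phi\cap\Theta)$. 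The actual proof in \cite{anderson analytically geometrically} has to exploit the finite-area hypothesis quantitatively, via the Poincar\'e metric on $\Omega(\Theta)/\Theta$ and a case analysis on how $x$ sits relative to the components of $\Omega(\Theta)$ and the residual limit set; none of that machinery is supplied here, so the argument is incomplete exactly at its crux.
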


As above, note that in the case that $\Phi =\langle \varphi\rangle$ is loxodromic cyclic, the conclusion of Theorem \ref{anderson analytically geometrically} can be rephrased as saying that there exists an integer $n_\varphi >0$ so that $\varphi^{n_\varphi}\in \Theta$.  

Making use of the resolution of Marden's Tameness Conjecture by Agol \cite{agol} and Calegari and Gabai \cite{calegari gabai}, which states that all finitely generated Kleinian groups acting on ${\mathbb H}^3$ are topologically tame, we have the following (which subsumes the main results of Anderson \cite{anderson lsit}).  (See also Soma \cite{soma}.)  Recall that a Kleinian group $\Gamma$ acting on ${\mathbb H}^3$ is {\em virtually fibered} over a finitely generated subgroup $\Phi$ if there exists a finite index subgroup $\Gamma^0$ so that ${\mathbb H}^3/\Gamma^0$ is a closed hyperbolic $3$-manifold fibering over the circle ${\mathbb S}^1$ with fiber subgroup $\Gamma^0\cap\Phi$. 

\begin{theorem} [Theorems A and C of Anderson \cite{anderson topologically}] Let  $\Gamma$ be a non-elementary purely loxodromic Kleinian group acting on ${\mathbb H}^3$, and let $\Phi$ and $\Theta$ be non-trivial finitely generated subgroups of $\Gamma$, where $\Theta$ is non-elementary.  We then have that either $\Lambda(\Phi\cap\Theta) = \Lambda(\Phi)\cap\Lambda(\Theta)$ or we are in the exceptional case that $\Gamma$ is virtually fibered over either $\Phi$ or $\Theta$.
\label{anderson topologically}
\end{theorem}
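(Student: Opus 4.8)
The plan is to prove the two inclusions separately, the burden lying entirely on one of them. Since $\Phi\cap\Theta$ is a subgroup of each of $\Phi$ and $\Theta$, part (1) of Lemma \ref{limit set properties} gives the inclusion $\Lambda(\Phi\cap\Theta)\subseteq\Lambda(\Phi)\cap\Lambda(\Theta)$ for free and with no hypotheses. Thus all of the content of Theorem \ref{anderson topologically} is in establishing the reverse inclusion $\Lambda(\Phi)\cap\Lambda(\Theta)\subseteq\Lambda(\Phi\cap\Theta)$, and I would do this under the standing assumption that $\Gamma$ is \emph{not} virtually fibered over either $\Phi$ or $\Theta$, aiming to derive the equality in that regime. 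The first ingredient I would invoke is the resolution of Marden's Tameness Conjecture by Agol \cite{agol} and Calegari and Gabai \cite{calegari gabai}: as $\Phi$ and $\Theta$ are finitely generated, the quotients $N_\Phi={\mathbb H}^3/\Phi$ and $N_\Theta={\mathbb H}^3/\Theta$ are topologically tame, so each is homeomorphic to the interior of a compact $3$-manifold and, since we are in the purely loxodromic setting, each end is either geometrically finite or simply degenerate.

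I would first dispose of the case in which both $\Phi$ and $\Theta$ are geometrically finite. Here no end is degenerate, so the fibering alternative cannot arise, and the purely loxodromic case of the geometrically finite limit set intersection theorem of Susskind and Swarup already gives $\Lambda(\Phi)\cap\Lambda(\Theta)=\Lambda(\Phi\cap\Theta)$. The substantive case is therefore the one in which at least one of the two groups, say $\Theta$, is geometrically infinite, so that $N_\Theta$ carries a simply degenerate end $E$; it is precisely the limit points associated to such ends, which need not be conical, that are not covered by the geometrically finite theory. When $\Phi$ is merely loxodromic cyclic, generated by $\varphi$, this reduces to the clean statement that ${\rm fix}(\varphi)\cap\Lambda(\Theta)\ne\emptyset$ forces some power of $\varphi$ into $\Theta$, recovering the rephrasing recorded after Theorem \ref{anderson analytically geometrically}, and I would treat it as a warm-up for the general argument.

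For a simply degenerate end $E$ of $N_\Theta$ I would apply the Covering Theorem of Thurston and Canary to the covering $N_\Theta\to N_\Gamma={\mathbb H}^3/\Gamma$. This produces a dichotomy: either $E$ has a neighbourhood on which the covering projection to $N_\Gamma$ is finite-to-one, or $N_\Gamma$ has finite volume and $\Gamma$ is virtually fibered over $\Theta$ in the sense defined above --- which is exactly the excluded exceptional case. Under our standing assumption the first alternative must hold, and I would then combine the finiteness of the covering over $E$ with the parallel analysis of $N_\Phi$ and of the common cover $N_{\Phi\cap\Theta}={\mathbb H}^3/(\Phi\cap\Theta)$ to produce loxodromic elements of $\Phi\cap\Theta$ whose fixed points accumulate at a given $x\in\Lambda(\Phi)\cap\Lambda(\Theta)$, thereby placing $x$ in $\Lambda(\Phi\cap\Theta)$. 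Running the symmetric argument with the roles of $\Phi$ and $\Theta$ exchanged accounts for the ``or $\Phi$'' alternative in the statement.

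The step I expect to be the main obstacle is this last one: converting the topological conclusion that the degenerate end covers finitely into the algebraic conclusion that $x$ is a limit point of $\Phi\cap\Theta$, while correctly matching the degenerate ends of $N_\Phi$, $N_\Theta$, and $N_{\Phi\cap\Theta}$ under the respective coverings and keeping careful track of which of $\Phi$ or $\Theta$ the fibering exception is attached to. A subtler point, which I do not expect to follow from the geometrically finite theory at all, is the case where $x$ is simultaneously non-conical for both $\Phi$ and $\Theta$: there one must compare two degenerate-end structures at once, rather than playing a geometrically finite factor against a geometrically infinite one.
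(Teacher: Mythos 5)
The paper itself offers no proof of this statement: it is imported verbatim from the literature, being Theorems A and C of Anderson's paper on topologically tame subgroups, upgraded to the stated form by the Agol--Calegari--Gabai resolution of the Tameness Conjecture. So there is no in-paper argument to compare yours against, and your proposal has to be judged as a reconstruction of the original proof. Your skeleton is the right one and matches the known argument in its large-scale architecture: the easy inclusion from Lemma \ref{limit set properties}, tameness to classify the ends of $N_\Phi$ and $N_\Theta$ as geometrically finite or simply degenerate, Susskind--Swarup to dispatch the case where both groups are geometrically finite, and Canary's Covering Theorem to convert a simply degenerate end of $N_\Theta$ into the dichotomy ``finite-to-one over a neighbourhood of the end'' versus ``$\Gamma$ virtually fibers over $\Theta$,'' the latter being exactly the excluded case.

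The genuine gap is the step you yourself flag as the main obstacle and then do not carry out: passing from ``the degenerate end covers finitely-to-one'' to ``$x\in\Lambda(\Phi\cap\Theta)$.'' That step is not a technical afterthought; it is where essentially all of the content of Theorems A and C lives. Concretely, given $x\in\Lambda(\Phi)\cap\Lambda(\Theta)$ one must split according to whether $x$ is a conical limit point of one of the two groups (in which case a lemma of the form quoted in Section \ref{section susskind conjecture} --- $x\in\Lambda_c(\Phi)\cap\Lambda(\Theta)$ plus a finiteness condition on $\Theta$ implies $x\in\Lambda(\Phi\cap\Theta)$ --- does the work), or whether the geodesic ray to $x$ exits a simply degenerate end of each quotient. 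In the latter case one needs the Covering Theorem to show the two degenerate ends are carried to essentially the same end of an intermediate cover, and then an argument with simplicial hyperbolic (or pleated) surfaces exiting the end to produce a commensurability between the end subgroups of $\Phi$ and $\Theta$, which is what actually yields nontrivial elements of $\Phi\cap\Theta$ accumulating at $x$. Your proposal declares this step rather than performs it, and it also only names, without resolving, the hardest subcase --- $x$ non-conical for both groups simultaneously. As written, the proposal is a correct roadmap to the proof in the cited reference, not a proof.
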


Note that in the case that $\Phi =\langle \varphi\rangle$ is loxodromic cyclic, the conclusion of Theorem \ref{anderson topologically} in the case that $\Gamma$ is not virtually fibered over $\Theta$ can be rephrased as saying that  there exists an integer $n_\varphi >0$ so that $\varphi^{n_\varphi}\in \Theta$.   In the case that $\Gamma$ is virtually fibered over $\Theta$, then we necessarily have a non-trivial element $\gamma\in\Gamma$ so that ${\rm fix}(\gamma)\subset \Lambda(\Theta) = {\mathbb S}^2_\infty$ but for which no proper power of $\gamma$ lies in $\Theta$.

In particular, in light of the above results, we have a comprehensive understanding of the behavior of the limit set $\Lambda(\Phi\cap\Theta)$ of the intersection of two subgroups $\Phi$ and $\Theta$ of a Kleinian group $\Gamma$ acting on ${\mathbb H}^3$, in that $\Lambda(\Phi\cap\Theta) = \Lambda(\Phi)\cap\Lambda(\Theta)$ for finitely generated $\Phi$ and $\Theta$, as well as for some infinitely generated through analytically finite subgroups as well.    However, nothing is known about the behavior of $\Lambda(\Phi\cap\Theta)$ in terms of $\Lambda(\Phi\cap\Theta)$ for general infinitely generated $\Phi$ and $\Theta$. 

We have the following generalization to Kleinian groups acting on ${\mathbb H}^n$ for general $n\ge 2$, which generalizes the main result of Susskind's thesis, where he proved this result in the case $n=3$.  As we have phrased it here, Theorem \ref{susskind swarup} does some overlap some of the previously stated results in the cases $n=2$ and $n=3$.

\begin{theorem} [Corollaries 1 and 3 of Susskind and Swarup \cite{susskind swarup}] Let $\Gamma$ be a non-elementary purely loxodromic Kleinian group acting on ${\mathbb H}^n$ for some $n\ge 2$, and let $\Phi$ and $\Theta$ be non-trivial, convex co-compact subgroups of $\Gamma$ with $\Theta$ non-elementary.  We then have that $\Lambda(\Phi)\cap\Lambda(\Theta) = \Lambda(\Phi\cap\Theta)$. 
\label{susskind swarup}
\end{theorem}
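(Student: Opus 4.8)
The plan is to prove the two inclusions separately, with only the inclusion $\Lambda(\Phi)\cap\Lambda(\Theta)\subseteq\Lambda(\Phi\cap\Theta)$ carrying any content. The reverse inclusion $\Lambda(\Phi\cap\Theta)\subseteq\Lambda(\Phi)\cap\Lambda(\Theta)$ is immediate from part (1) of Lemma \ref{limit set properties}, since $\Phi\cap\Theta$ is a subgroup of both $\Phi$ and $\Theta$. For the forward inclusion I would first convert the convex co-compactness hypotheses into the dynamical statements I actually need: a convex co-compact group acts co-compactly on the convex hull of its limit set, and from this one reads off that every limit point is a conical limit point, and in fact a \emph{uniform} conical limit point, so that $\Lambda(\Phi)=\Lambda_c(\Phi)$ and $\Lambda(\Theta)=\Lambda_c^u(\Theta)$. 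It therefore suffices to fix $x\in\Lambda_c(\Phi)\cap\Lambda_c^u(\Theta)$ and produce a sequence of distinct elements of $\Phi\cap\Theta$ whose orbit of a fixed basepoint accumulates at $x$. The remaining inclusion $\Lambda_c(\Phi)\cap\Lambda_c^u(\Theta)\subseteq\Lambda(\Phi\cap\Theta)$ is (a weak form of) Proposition \ref{conical and uniform conical}, whose argument I sketch directly.

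Fix a basepoint $o\in{\mathbb H}^n$ and a geodesic ray $r$ ending at $x$. The crux is a matching argument along $r$. Since $x\in\Lambda_c(\Phi)$, conicality applied to $r$ yields distinct $\varphi_k\in\Phi$ and a constant $N_1$ with $d(\varphi_k(o),r)\le N_1$ and $\varphi_k(o)\to x$. Since $x\in\Lambda_c^u(\Theta)$, uniform conicality applied to $r$ says that the orbit $\Theta\cdot o$ is coarsely dense along $r$: there is a constant $N_2$ so that every point of $r$ lies within $N_2$ of some $\theta(o)$ with $\theta\in\Theta$. Combining these, for each $k$ I take a nearest point $r(t_k)$ to $\varphi_k(o)$ on $r$ and then choose $\theta_{m(k)}\in\Theta$ with $d(r(t_k),\theta_{m(k)}(o))\le N_2$, so that $d(\varphi_k(o),\theta_{m(k)}(o))\le N_1+N_2=:D$.

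Now set $\gamma_k=\varphi_k^{-1}\theta_{m(k)}\in\Gamma$, so that $d(o,\gamma_k(o))=d(\varphi_k(o),\theta_{m(k)}(o))\le D$. Because $\Gamma$ is discrete, the set of $\gamma\in\Gamma$ moving $o$ a distance at most $D$ is finite, so after passing to a subsequence I may assume that $\gamma_k=\gamma$ is constant, i.e.\ $\theta_{m(k)}=\varphi_k\gamma$ for all $k$ in an infinite index set. Fixing one such index $k_0$ and putting $\delta_k=\varphi_k\varphi_{k_0}^{-1}$, a direct cancellation gives $\delta_k=\theta_{m(k)}\theta_{m(k_0)}^{-1}$, so $\delta_k$ lies in $\Phi$ and in $\Theta$ simultaneously, hence in $\Phi\cap\Theta$. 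The $\delta_k$ are distinct because the $\varphi_k$ are, and $\delta_k(\varphi_{k_0}(o))=\varphi_k(o)\to x$. Since $\Phi\cap\Theta$ is a purely loxodromic discrete group, an accumulation point of one of its orbits lies in its limit set, and therefore $x\in\Lambda(\Phi\cap\Theta)$, completing the forward inclusion.

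I expect the main obstacle to be the uniformity of the constant $D$ in the matching step, and this is exactly where the hypotheses are spent: mere conicality of $x$ for $\Theta$ would allow the returns of $\Theta\cdot o$ to $r$ to be arbitrarily sparse, so that a given $\varphi_k(o)$ need not have any $\theta(o)$ within bounded distance, and the pigeonhole would fail. It is the \emph{uniform} conicality of $x$ for $\Theta$ — equivalently, the co-compact action of $\Theta$ on its convex core — that forces bounded gaps along $r$ and hence the uniform bound $D$ that discreteness needs. A secondary point to handle is the passage from ``$x$ is an orbit accumulation point of $\Phi\cap\Theta$'' to ``$x\in\Lambda(\Phi\cap\Theta)$'' under the fixed-point-closure definition of the limit set used here; this is routine for purely loxodromic discrete groups, where both the loxodromic cyclic and the non-elementary cases reduce the orbit-accumulation characterization to the stated definition.
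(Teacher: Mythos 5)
Your argument is essentially correct, but it is worth being clear that the paper itself offers no proof of Theorem \ref{susskind swarup}: it is imported wholesale as Corollaries 1 and 3 of Susskind and Swarup \cite{susskind swarup}, whose original argument works directly with the geometry of convex hulls and Nielsen regions. What you have done instead is derive the theorem from the paper's own Proposition \ref{conical and uniform conical} (your matching argument along $r$, with the pigeonhole on $\varphi_k^{-1}\theta_{m(k)}$, is the same diagonalization the paper uses to prove that proposition), after first upgrading the hypotheses via the identifications $\Lambda(\Phi)=\Lambda_c(\Phi)$ and $\Lambda(\Theta)=\Lambda_c^u(\Theta)$. The author explicitly gestures at this relationship in Section \ref{section susskind conjecture}, so your route is very much in the spirit of the paper, and it even yields the marginally stronger conclusion $x\in\Lambda_c(\Phi\cap\Theta)$. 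The one place where you are spending a nontrivial external theorem rather than a definition is the step $\Lambda(\Theta)=\Lambda_c^u(\Theta)$: under the paper's definition, convex co-compact means only $\Lambda(\Theta)=\Lambda_c(\Theta)$, and passing from ``every limit point is conical'' to ``$\Theta$ acts co-compactly on the convex hull of $\Lambda(\Theta)$,'' hence to uniform conicality of every limit point, is the Beardon--Maskit theorem (in dimension $n\ge 4$ one needs Bishop's generalization \cite{bishop bm theorem}). You should cite that explicitly rather than treating it as a formal consequence of the definition; with that reference supplied, the proof is complete. The trade-off is clear: Susskind and Swarup's proof is self-contained in the convex-hull picture, while yours buys a uniform treatment that isolates exactly which dynamical property of $\Theta$ (bounded gaps of $\Theta\cdot o$ along $r$, i.e.\ uniform conicality at $x$) makes the pigeonhole work -- which is precisely the observation motivating Proposition \ref{conical and uniform conical}.
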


Yet again, we note that in the case that $\Phi =\langle \varphi\rangle$ is loxodromic cyclic, the conclusion of Theorem \ref{susskind swarup} can be rephrased as saying that there exists an integer $n_\varphi >0$ so that $\varphi^{n_\varphi}\in \Theta$.  

Though it is an interesting question in its own right, we do not consider in this note the question of determining the most general context in which limit set intersection theorems of the above sort hold.  We do note that the basic requirements are that the groups in question act reasonably on a space, where the action of the group leaves a physical trace on the space (akin to the limit set in the case of Kleinian groups).  Among these cases which have been considered are hyperbolic groups and relatively hyperbolic groups acting on compact metric spaces.  We refer the interested reader to Yang \cite{yang} for the case of relatively quasiconvex subgroups of a relatively hyperbolic group, as well as an introduction to what is known for hyperbolic groups.

\section{The Susskind conjecture}
\label{section susskind conjecture}

We know from Lemma \ref{limit set properties} that for subgroups $\Phi$ and $\Theta$ of a Kleinian group $\Gamma$, the inclusion $\Lambda(\Phi\cap\Theta)\subseteq \Lambda(\Phi)\cap\Lambda(\Theta)$ always holds.   The question of interest is, how much larger can $\Lambda(\Phi)\cap\Lambda(\Theta)$ be than $\Lambda(\Phi\cap\Theta)$.   The main conjecture that guides and shapes our discussion is the following, due to Susskind (oral communication).

\medskip
\noindent
{\bf Conjecture 0 (Susskind)} {\em Let $\Gamma$ be a non-elementary Kleinian group acting on ${\mathbb H}^n$ for some $n\ge 2$.  Let $\Phi$ and $\Theta$ be non-elementary  subgroups of $\Gamma$.  We then have that $\Lambda_c(\Phi)\cap\Lambda_c(\Theta)\subseteq  \Lambda(\Phi\cap\Theta)$.}
\medskip

In Section \ref{two examples}, we present two examples to show that the statement of Conjecture 0 cannot be significantly strengthened.   Looking ahead, one example shows that we cannot expect to have $\Lambda_c(\Phi)\cap\Lambda_c(\Theta)\subseteq  \Lambda_c (\Phi\cap\Theta)$ for general $\Phi$ and $\Theta$, and the other example shows that we cannot expect to have $\Lambda(\Phi)\cap\Lambda(\Theta)\subseteq  \Lambda(\Phi\cap\Theta)$ for general $\Phi$ and $\Theta$.

Unfortunately, we are not yet able to provide a complete proof of Conjecture 0, though we are able to come close in a measure-theoretic sense (as described below).  Specifically, we prove the following Proposition and consider its implications for Conjecture 0.

\begin{proposition} Let $\Gamma$ be a non-elementary purely loxodromic Kleinian group acting on ${\mathbb H}^n$ for some $n\ge 2$ and let $\Phi$ and $\Theta$ be non-trivial subgroups of $\Gamma$.  We then have that $\Lambda_c(\Phi)\cap \Lambda_c^u (\Theta)\subseteq \Lambda_c (\Phi\cap\Theta)$. 
\label{conical and uniform conical}
\end{proposition}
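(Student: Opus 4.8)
The plan is to work with a single, arbitrarily chosen ray $r\subset {\mathbb H}^n$ with endpoint $x$, and to extract from the two hypotheses witnessing data that I can then combine algebraically. Since $x\in \Lambda_c(\Phi)$, applied to $r$ there are a compact set $K_\Phi$ and distinct elements $\phi_j\in\Phi$ together with points $p_j\in K_\Phi$ such that $y_j:=\phi_j(p_j)$ lies on $r$; proper discontinuity forces the $y_j$ to exit to infinity, so $y_j\to x$ along $r$. Since $x\in\Lambda_c^u(\Theta)$, applied to the \emph{same} ray $r$ there are a compact set $K_\Theta$ and elements $\theta_k\in\Theta$ with $r\subseteq \bigcup_k \theta_k(K_\Theta)$. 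The reason for invoking the uniform conical condition for $\Theta$, rather than merely the conical condition, is precisely that it covers all of $r$: in particular each point $y_j$ produced by $\Phi$ lies in some translate, say $y_j=\theta_{k(j)}(q_j)$ with $q_j\in K_\Theta$. Ordinary conicality of $\Theta$ would only place \emph{some} sequence of points of $r$ into $\Theta$-translates, with no guarantee that the $\Phi$-points $y_j$ are among them.

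Next I would combine the two descriptions of $y_j$. Setting $h_j:=\phi_j^{-1}\theta_{k(j)}\in\Gamma$, the identity $\phi_j(p_j)=y_j=\theta_{k(j)}(q_j)$ gives $h_j(q_j)=p_j$. Since $q_j\in K_\Theta$ and $p_j\in K_\Phi$, the compact set $C:=K_\Phi\cup K_\Theta$ satisfies $h_j(C)\cap C\ne\emptyset$ for every $j$; as $\Gamma$ acts properly discontinuously on ${\mathbb H}^n$, only finitely many elements of $\Gamma$ have this property, so $\{h_j\}$ takes only finitely many values. Passing to a subsequence and fixing an index $j_0$ within it, I may assume $h_j=h$ is constant, i.e. $\theta_{k(j)}=\phi_j h$ for all $j$ in the subsequence. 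Then
\[ g_j:=\phi_j\phi_{j_0}^{-1}=(\phi_j h)(\phi_{j_0}h)^{-1}=\theta_{k(j)}\theta_{k(j_0)}^{-1}, \]
so $g_j$ lies in $\Phi$ (from the first expression) and in $\Theta$ (from the last), whence $g_j\in\Phi\cap\Theta$.

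Finally I would verify that the $g_j$ witness $x\in\Lambda_c(\Phi\cap\Theta)$ for the ray $r$. The $g_j$ are distinct because the $\phi_j$ are. Taking the compact set $K:=\phi_{j_0}(K_\Phi)$, I compute $g_j(\phi_{j_0}(p_j))=\phi_j(p_j)=y_j$, where $\phi_{j_0}(p_j)\in K$ and $y_j\in r$; thus $g_j(K)\cap r\ne\emptyset$ for every $j$. Since the $g_j$ are distinct and $K$ is compact, proper discontinuity again forces $y_j\to x$ along $r$, so $x\in\Lambda(\Phi\cap\Theta)$, and the pair $(K,\{g_j\})$ exhibits $x$ as a conical limit point of $\Phi\cap\Theta$ with respect to $r$. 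As $r$ was an arbitrary ray to $x$, this establishes $x\in\Lambda_c(\Phi\cap\Theta)$ and hence the claimed inclusion.

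The heart of the matter is the cancellation $\phi_j\phi_{j_0}^{-1}=\theta_{k(j)}\theta_{k(j_0)}^{-1}$, which manufactures non-trivial elements of $\Phi\cap\Theta$ out of the two families of group elements; everything else is bookkeeping with proper discontinuity. I expect the main obstacle to be arranging that the $\Phi$-witness points $y_j$ genuinely lie in $\Theta$-translates, and it is exactly here that the uniform conical hypothesis on $\Theta$ (as opposed to a merely conical one) does the essential work — which also indicates why the inclusion in Conjecture 0, with only conical hypotheses on both factors, remains out of reach by this method. The purely loxodromic assumption enters only to guarantee a free, properly discontinuous action, which streamlines the discreteness bookkeeping; proper discontinuity is all that is really used.
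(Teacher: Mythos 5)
Your proof is correct and follows essentially the same route as the paper's: use the uniform covering of $r$ by $\Theta$-translates to pair each $\Phi$-witness with a $\Theta$-element, extract a constant value of $\phi_j^{-1}\theta_{k(j)}$ by discreteness, and cancel to produce distinct elements $\phi_j\phi_{j_0}^{-1}=\theta_{k(j)}\theta_{k(j_0)}^{-1}\in\Phi\cap\Theta$ witnessing conicality for the ray $r$. The only (cosmetic) difference is that the paper runs the discreteness step by projecting the translates $\varphi_k(K')$ to the quotient ${\mathbb H}^n/\Theta$ and diagonalizing, whereas you invoke proper discontinuity upstairs on the compact set $K_\Phi\cup K_\Theta$; your version also avoids the paper's unnecessary (and unjustified) assertion that the constant value is the identity, since only constancy is needed.
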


\begin{proof} Let $x\in \Lambda_c(\Phi)\cap \Lambda_c^u (\Theta)$ be any point and let $r\subset {\mathbb H}^n$ be any ray with $x$ as its endpoint at infinity.  By the definition of $\Lambda_c (\Phi)$ given above, there exists a compact set $K'\subset {\mathbb H}^n$ and a sequence $\{ \varphi_k\}$ of distinct elements of $\Phi$ so that $\varphi_k(K')\cap r$ is non-empty for all $k$.  By the definition of $\Lambda_c^u (\Theta)$ given above, there exists a compact set $K''\subset {\mathbb H}^n$ and a sequence $\{ \theta_\ell\}$ of distinct elements of $\Theta$ so that $r\subset \cup_\ell \theta_\ell(K'')$.  

Project the translates $\varphi_k(K')$ of $K'$ by the elements of the sequence $\{ \varphi_k\}\subset \Phi$ to the quotient ${\mathbb H}^n/\Theta$.   Since each $\varphi_k(K')\cap r$ is non-empty and since $r$ projects to a compact subset of ${\mathbb H}^n/\Theta$, we see that the $\varphi_k(K')$ project to a compact subset of ${\mathbb H}^n/\Theta$.  If we wish to be explicit, let $\varepsilon = {\rm diam}(K')$ be the hyperbolic diameter of $K'$ in ${\mathbb H}^n$, and note that the $\varphi_k(K')$ project to the closure of the $\varepsilon$-neighborhood of the projection of $K''$ to ${\mathbb H}^n/\Theta$.

Choose a point $z_0\in K'$.  Diagonalizing, we have that the sequence $\{ \theta_m  \varphi_m (z_0)\}$ is contained in the just-described compact subset of ${\mathbb H}^n/\Theta$ and so has a convergent subsequence, which we also denote by $\{ \theta_m  \varphi_m (z_0)\}$.  The discreteness of $\Gamma$ yields that $\theta_m \varphi_m = {\rm id}_\Gamma$ for infinitely many $m$, where ${\rm id}_\Gamma$ is the identity element of $\Gamma$. 

In particular, by yet again passing to a subsequence (and reindexing, and starting the indexing in the resulting sequence at $m=0$), we have that $\theta_m \varphi_m = \theta_0 \varphi_0 = {\rm id}_\Gamma$ for all $m\ge 0$.  This immediately yields that $\Phi\cap\Theta$ is non-trivial, as $\theta_0^{-1}\theta_m =\varphi_0 \varphi_m^{-1}\in\Phi\cap\Theta$ for all $m\ge 0$.  (Recall that we assumed initially that the elements of the sequences $\{ \varphi_k\}$ and  $\{ \theta_\ell\}$ were distinct, and so none of the $\theta_0^{-1}\theta_m =\varphi_0 \varphi_m^{-1}$ are the identity.)

To see that $x\in \Lambda_c (\Phi\cap\Lambda)$, we merely need to note that taking the sequence $\{ (\varphi_0 \varphi_m^{-1})^{-1} = \varphi_m \varphi_0^{-1}\}$ of distinct elements of $\Phi\cap\Theta$ and the compact set $\varphi_0 (K')\subset {\mathbb H}^n$, we have that $\varphi_m \varphi_0^{-1} (\varphi_0(K'))\cap r = \varphi_m (K')\cap r$ is non-empty for all $m$.   {\hfill ${\bf QED}$ \\ \vspace{.15cm}}
\end{proof}

It follows from work of Bishop and Jones \cite{bishop jones} that in a quantifiable sense, most points of $\Lambda_c(\Gamma)$ are in fact points of $\Lambda_c^u (\Gamma)$.   To make this precise, we need the following definition.  Given a non-elementary Kleinian group $\Gamma$ acting on ${\mathbb H}^n$ for some $n\ge 2$ and a point $z_0\in {\mathbb H}^n$, consider the {\em Poincar\'e series} 
\[ \sum_{\gamma\in\Gamma} \exp(-s \: {\rm d}_{{\mathbb H}^n} (z_0, \gamma(z_0))) \]
for $s >0$, where ${\rm d}_{{\mathbb H}^3} (\cdot, \cdot)$ is the hyperbolic distance in ${\mathbb H}^3$.  When $s$ is large, this series converges, and when $s$ is sufficiently close to zero, this series diverges.  In particular, there is a value $\delta(G) >0$, the {\em critical exponent}, of the parameter $s$ so that the series converges for $s >\delta(G)$ and diverges for $s < \delta(G)$.  

The critical exponent is closely connected to the {\em Hausdorff dimension} ${\rm dim}(\Lambda(\Gamma))$ of the limit set, and in fact in the same paper, Bishop and Jones provide a concise and complete proof of the fact that for a non-elementary Kleinian group $\Gamma$ acting on ${\mathbb H}^n$ for some $n\ge 2$, the equality $\delta(G) = {\rm dim}(\Lambda_c (\Gamma))$ holds.

The relationship between $\Lambda_c (\Gamma)$ and $\Lambda_c^u (\Gamma)$ is given in the following result.  In particular, the last sentence of Theorem \ref{bishop jones most are uniform} can be rephrased as saying that ${\rm dim}(\Lambda_c(\Gamma)) = {\rm dim}(\Lambda_c^u (\Gamma))$.

\begin{theorem} [Corollary 2.5 of Bishop and Jones \cite{bishop jones}] If $G$ is any non-elementary, discrete M\"obius group, $\varepsilon >0$ and $x\in M={\mathbb H}^n/G$, then there is an $R=R(\varepsilon,x)$ such that the set of directions (i.e., unit tangents at $x$) which correspond to geodesic rays starting at $x$ which never leave the ball of radius $R$ around $x$ has dimension $\ge \delta(G)-\varepsilon = {\rm dim}(\Lambda_c (G))-\varepsilon$.  In particular, the set of directions at $x$ of bounded geodesic rays has dimension exactly $\delta(G)$.
\label{bishop jones most are uniform}
\end{theorem}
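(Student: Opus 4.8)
The plan is to establish the quantitative lower bound $\dim \ge \delta(G) - \varepsilon$ by an explicit Cantor-set construction inside the limit set, and then to combine it with the already-cited identity $\dim(\Lambda_c(G)) = \delta(G)$ to upgrade the conclusion to the exact value. First I would fix a lift $\tilde{x} \in {\mathbb H}^n$ of $x$ and use the standard visual identification of the unit tangent sphere at $\tilde{x}$ with ${\mathbb S}^{n-1}_\infty$; since this identification is bi-Lipschitz between the angular metric at $\tilde{x}$ and the visual metric on the sphere, it preserves Hausdorff dimension. Under it, a geodesic ray from $x$ that never leaves the ball of radius $R$ in $M = {\mathbb H}^n/G$ lifts to a ray from $\tilde{x}$ that stays within hyperbolic distance $R$ of the orbit $G\tilde{x}$; its endpoint is therefore a uniform conical limit point realized with the compact set $\overline{B(\tilde{x},R)}$. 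Thus it suffices to produce, for a suitable $R = R(\varepsilon,x)$, a subset $F \subseteq \Lambda(G)$ of such uniform conical endpoints with $\dim F \ge \delta(G) - \varepsilon$.

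The engine of the construction is the divergence of the Poincar\'e series. Choosing $s = \delta(G) - \varepsilon/2 < \delta(G)$, the series $\sum_{\gamma} \exp(-s\, {\rm d}_{{\mathbb H}^n}(\tilde{x}, \gamma \tilde{x}))$ diverges. I would partition the orbit $G\tilde{x}$ into spherical shells of bounded width $L$, namely $\{\gamma : jL \le {\rm d}_{{\mathbb H}^n}(\tilde{x}, \gamma \tilde{x}) < (j+1)L\}$, where $L = L(\varepsilon)$ is chosen large but fixed. The shadow lemma gives, for each orbit point $\gamma\tilde{x}$, a shadow on ${\mathbb S}^{n-1}_\infty$ of visual diameter comparable to $\exp(-{\rm d}_{{\mathbb H}^n}(\tilde{x}, \gamma\tilde{x}))$. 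A pigeonhole argument over the shells, using divergence, then produces in infinitely many shells a family of orbit points whose shadows are pairwise disjoint and whose total $s$-content $\sum ({\rm diam})^s$ is bounded below by a positive constant independent of the shell. The bounded width $L$ is crucial here: it guarantees that each selected orbit point lies at incremental distance at most $L$ from the previous level, which is exactly what will later keep the geodesics from drifting away from the orbit.

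With these shell families in hand, I would build a nested Cantor tree. Starting inside one shadow, I recenter by the corresponding group element and, inside the recentered shadow, select from the next shell a disjoint subfamily of child shadows, iterating indefinitely. Because the shells have width $L$, the ratio of child to parent shadow diameter is pinched between two positive constants depending only on $L$, and the branching can be arranged so that $\sum_{\text{children}}(\text{diameter ratio})^s \ge 1$. Equidistributing mass through the tree defines a measure $\mu$ supported on the limiting Cantor set $F = \bigcap(\text{shadows})$, and a routine mass-distribution estimate yields the Frostman bound $\mu(B(\xi,\rho)) \le C\rho^{s}$, whence $\dim F \ge s = \delta(G) - \varepsilon/2$. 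Moreover, for each $\xi \in F$ the geodesic from $\tilde{x}$ to $\xi$ passes through every nested shadow and therefore within a bounded distance (controlled by $L$ and the shadow-lemma constant) of each chosen orbit point; since consecutive chosen orbit points are within distance $L$ of one another, the ray never strays more than some $R = R(L)$ from $G\tilde{x}$. This realizes every $\xi \in F$ as a uniform conical limit point inside radius $R$, giving the bound $\ge \delta(G) - \varepsilon$. Finally, bounded rays are in particular conical, so their endpoints lie in $\Lambda_c(G)$; combined with the cited equality $\dim(\Lambda_c(G)) = \delta(G)$ this forces the dimension to be at most $\delta(G)$, and letting $\varepsilon \to 0$ upgrades the lower bound to the asserted equality.

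The main obstacle is the interaction between the two requirements above: forcing the shells to have bounded width $L$, which is what buys the uniform boundedness of the geodesics, must not cost any dimension. One has to verify that divergence of the Poincar\'e series survives the restriction to bounded-width annuli, in the strong form that at each level there remain enough well-separated shadows to sustain an $s$-balanced branching. Establishing this --- essentially a careful averaging of the divergent series across the shells to locate ``rich'' shells uniformly --- is the technical heart of the argument, and is where I would expect to spend the most effort.
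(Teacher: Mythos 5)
The paper does not prove this statement: it is quoted verbatim as Corollary 2.5 of Bishop and Jones \cite{bishop jones} and used as a black box, so there is no in-paper argument to compare yours against. Judged on its own terms, your outline is a recognizable and essentially faithful reconstruction of the actual Bishop--Jones proof: shadows of orbit points, a nested Cantor tree built generation by generation with bounded ``jumps'' so that the limiting geodesics stay within a fixed distance $R$ of the orbit, and the mass distribution principle to get the Frostman bound $\mu(B(\xi,\rho))\le C\rho^{s}$ and hence dimension at least $s=\delta(G)-\varepsilon$. The upper bound via the containment of bounded-ray endpoints in $\Lambda_c(G)$ and the cited equality $\delta(G)=\dim(\Lambda_c(G))$ is also fine in the context of this paper, which treats that equality as known.

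One concrete caution about the step you yourself flag as the technical heart. The ``pigeonhole over bounded-width shells'' does not work as literally stated: if $m_j$ denotes the $s$-content of the $j$-th shell, divergence of $\sum_j m_j$ does not force infinitely many $m_j$ to be bounded below by a fixed positive constant (consider $m_j = 1/j$), so you cannot directly locate rich shells of width $L(\varepsilon)$ this way. The mechanism Bishop and Jones actually use is a uniform truncated-series lemma: for $s<\delta(G)$ there is an $N$ such that, based at \emph{any} orbit point $z$, the sum $\sum e^{-s\,{\rm d}(z,\gamma \tilde{x})}$ restricted to $\gamma$ with ${\rm d}(z,\gamma\tilde{x})\le N$ is at least $1$. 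This is proved by contraposition using the triangle inequality ${\rm d}(\tilde{x},\gamma\gamma'\tilde{x})\le {\rm d}(\tilde{x},\gamma\tilde{x})+{\rm d}(\gamma\tilde{x},\gamma\gamma'\tilde{x})$: if every truncated sum were less than $1-\eta$, iterating would dominate the full Poincar\'e series by a convergent geometric series, contradicting divergence. That uniform $N$ is what plays the role of your $L$ and is what ultimately produces $R=R(\varepsilon,x)$. With that substitution your construction goes through; without it, the branching estimate at each generation is not justified.
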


The proof of the following Corollary follows immediately from the observation that for a loxodromic cyclic Kleinian group $\Phi = \langle \gamma\rangle$ acting on $\mathbb{H}^n$ for any $n\ge 2$, we have that both of the fixed points of $\gamma$ are points of $\Lambda_c^u (\Phi)$.  

\begin{corollary} Let $\Gamma$ be a purely loxodromic Kleinian group acting on ${\mathbb H}^n$ for some $n\ge 2$ and let $\Phi$ be a non-trivial subgroup of $\Gamma$.  Let $x\in \Lambda_c(\Phi)$ and let $\gamma$ be a non-trivial element of $\Gamma$ fixing $x$.  Then there exists $m\ge 1$ for which $\gamma^m\in \Phi$. 
\label{cyclic corollary}
\end{corollary}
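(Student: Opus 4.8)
The plan is to reduce the Corollary to Proposition \ref{conical and uniform conical} by choosing $\Theta$ to be the cyclic group generated by $\gamma$. Since $\Gamma$ is purely loxodromic and $\gamma$ is non-trivial, $\gamma$ is loxodromic, and so $\Theta = \langle\gamma\rangle$ is an infinite loxodromic cyclic subgroup of $\Gamma$; in particular it is non-trivial. As $x\in \Lambda_c(\Phi)\subseteq {\mathbb S}^{n-1}_\infty$ is fixed by the loxodromic element $\gamma$, and the only fixed points of $\gamma$ on ${\mathbb S}^{n-1}_\infty$ are the two points of ${\rm fix}(\gamma)$, we must have $x\in {\rm fix}(\gamma)$.

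Next I would invoke the observation recorded earlier in the note (and restated immediately before the Corollary): both fixed points of a loxodromic element are uniform conical limit points of the cyclic group it generates, so ${\rm fix}(\gamma)\subseteq \Lambda_c^u(\langle\gamma\rangle) = \Lambda_c^u(\Theta)$. Concretely, one takes $r$ to be the ray in ${\rm axis}(\gamma)$ with endpoint $x$ and $K$ to be the compact segment of $r$ between a point and its $\gamma^{\pm 1}$-translate, whose $\langle\gamma\rangle$-orbit covers $r$. Hence $x\in \Lambda_c(\Phi)\cap \Lambda_c^u(\Theta)$.

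Applying Proposition \ref{conical and uniform conical} to the non-trivial subgroups $\Phi$ and $\Theta$ then yields $x\in \Lambda_c(\Phi\cap\Theta)$, and in particular $\Phi\cap\Theta$ is non-trivial. But $\Phi\cap\Theta$ is a non-trivial subgroup of the infinite cyclic group $\Theta=\langle\gamma\rangle$, and every such subgroup is of the form $\langle\gamma^m\rangle$ for some $m\ge 1$. Therefore $\gamma^m\in \Phi\cap\Theta\subseteq \Phi$, which is precisely the desired conclusion.

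The one point requiring care is that Proposition \ref{conical and uniform conical} is stated for non-elementary $\Gamma$, whereas the Corollary permits $\Gamma$ to be elementary. I expect this to be the only (minor) obstacle, and it is easily dispatched: the proof of the Proposition never uses non-elementarity of $\Gamma$---only discreteness enters---so the argument above applies without change. Alternatively, if $\Gamma$ is elementary then $\Gamma=\langle g\rangle$ is itself loxodromic cyclic, $\Phi=\langle g^a\rangle$ for some $a\ge 1$, and writing $\gamma=g^b$ we have $\gamma^a=(g^a)^b\in\Phi$ directly. Every other step is forced, so no genuine difficulty arises.
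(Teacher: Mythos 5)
Your proof is correct and follows essentially the same route as the paper, which derives the Corollary by applying Proposition \ref{conical and uniform conical} with $\Theta = \langle\gamma\rangle$ after observing that ${\rm fix}(\gamma)\subseteq \Lambda_c^u(\langle\gamma\rangle)$. Your explicit handling of the case where $\Gamma$ is elementary (a hypothesis mismatch the paper glosses over) is a welcome extra bit of care but does not change the argument.
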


One common aspect of the proofs of many of the results given in the previous Section is a  preliminary result of the form: Let $\Gamma$ be a Kleinian group and let $\Phi$ and $\Theta$ be subgroups of $\Gamma$.  Suppose that $x\in \Lambda_c (\Phi)\cap\Lambda(\Theta)$ and that $\Theta$ satisfies a finiteness condition.  Then $x\in \Lambda(\Phi\cap\Theta)$.    

The finiteness conditions used above are that $\Gamma$ acts on ${\mathbb H}^3$ and $\Theta$ is analytically finite (Theorem \ref{anderson analytically geometrically}); that $\Gamma$ acts on ${\mathbb H}^3$ and $\Theta$ is finitely generated (Theorem \ref{anderson topologically}), or that $\Theta$ is convex co-compact for general $n\ge 2$ (Theoerem \ref{susskind swarup}).    Proposition \ref{conical and uniform conical} shows that we can often replace the finiteness condition on $\Theta$ with the condition that $x\in \Lambda_c^u (\Theta)$.

\medskip

There exists one situation in which we can prove the strongest possible result, that $\Lambda(\Phi)\cap\Lambda(\Theta) = \Lambda(\Phi\cap\Theta)$, in the case that $\Phi$ and $\Theta$ are non-trivial normal subgroups of $\Gamma$. 

\begin{lemma} Let $\Gamma$ be a non-elementary purely loxodromic Kleinian group acting on ${\mathbb H}^n$ for some $n\ge 2$.  Let $\Phi$ and $\Theta$ be non-trivial normal subgroups of $\Gamma$.  We then have that $\Phi\cap\Theta$ is non-empty and hence that $\Lambda(\Phi)\cap \Lambda(\Theta) = \Lambda(\Phi\cap\Theta)$.
\label{normal intersection}
\end{lemma}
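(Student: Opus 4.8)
The plan is to reduce the entire statement to the single assertion that $\Phi\cap\Theta$ is non-trivial, after which the limit-set equality follows formally.  The key algebraic observation is that, since $\Phi$ and $\Theta$ are both normal in $\Gamma$, the commutator subgroup satisfies $[\Phi,\Theta]\subseteq \Phi\cap\Theta$: for $\varphi\in\Phi$ and $\theta\in\Theta$ we have $[\varphi,\theta]=(\varphi\theta\varphi^{-1})\theta^{-1}\in\Theta$ by normality of $\Theta$, and symmetrically $[\varphi,\theta]=\varphi(\theta\varphi^{-1}\theta^{-1})\in\Phi$ by normality of $\Phi$.  It therefore suffices to exhibit a single pair $(\varphi,\theta)$ that fails to commute.

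First I would record that both $\Phi$ and $\Theta$ are non-elementary.  This is immediate from part (3) of Lemma \ref{limit set properties}: a non-trivial normal subgroup of the non-elementary group $\Gamma$ has limit set equal to $\Lambda(\Gamma)$, which is uncountable, so neither $\Phi$ nor $\Theta$ can be loxodromic cyclic, the only elementary possibility in the purely loxodromic setting.

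The main step, which I expect to carry the real content, is to rule out the possibility that $\Phi$ and $\Theta$ commute element-wise.  Suppose they did.  Since $\Phi$ is non-trivial and purely loxodromic, choose a loxodromic $\varphi\in\Phi$.  Every $\theta\in\Theta$ then satisfies $\theta\varphi\theta^{-1}=\varphi$, and since conjugation by $\theta$ carries the attracting and repelling fixed points of $\varphi$ to those of $\theta\varphi\theta^{-1}=\varphi$, each such $\theta$ must fix both points of ${\rm fix}(\varphi)$ individually.  Hence $\Theta$ fixes the two-point set ${\rm fix}(\varphi)$ pointwise, forcing $\Lambda(\Theta)\subseteq {\rm fix}(\varphi)$ and so $\Theta$ elementary, contradicting the previous paragraph.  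Consequently some commutator $[\varphi,\theta]\neq {\rm id}_\Gamma$, and this non-trivial element lies in $\Phi\cap\Theta$.

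Finally, since the intersection of two normal subgroups is normal, $\Phi\cap\Theta$ is a non-trivial normal subgroup of $\Gamma$, and a further application of part (3) of Lemma \ref{limit set properties} yields $\Lambda(\Phi)=\Lambda(\Theta)=\Lambda(\Phi\cap\Theta)=\Lambda(\Gamma)$; hence $\Lambda(\Phi)\cap\Lambda(\Theta)=\Lambda(\Gamma)=\Lambda(\Phi\cap\Theta)$, as required.  The only point demanding genuine care is the geometric claim that an isometry commuting with a loxodromic element fixes its two axis endpoints individually rather than interchanging them; this is precisely what makes the element-wise-commuting case collapse to elementarity, and it is the crux of the argument.
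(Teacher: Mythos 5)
Your proof is correct, and its algebraic core --- that normality of both subgroups forces $[\Phi,\Theta]\subseteq\Phi\cap\Theta$ --- is exactly the computation the paper performs (the paper's element $\theta^{-1}\theta_0=\theta^{-1}\varphi\theta\varphi^{-1}$ is precisely such a commutator). Where you genuinely diverge is in handling the degenerate case of a trivial commutator. The paper fixes a \emph{single} pair of loxodromic elements $\varphi\in\Phi$, $\theta\in\Theta$ and, if they happen to commute, invokes the fact that two commuting loxodromics are powers of a common primitive loxodromic $\gamma$, so that $\gamma^{n_\varphi n_\theta}$ lies in the intersection; this produces a non-trivial element of $\Phi\cap\Theta$ without ever needing to know that $\Phi$ or $\Theta$ is non-elementary. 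You instead quantify over all pairs: you first upgrade ``non-trivial normal'' to ``non-elementary'' via Lemma \ref{limit set properties}(3) and the uncountability of $\Lambda(\Gamma)$, and then rule out element-wise commuting by observing that the centralizer of a loxodromic in a discrete group fixes its two axis endpoints and is therefore elementary. Both routes are sound; the paper's buys a slightly more local statement (any single non-commuting or commuting pair already yields an intersection element), while yours isolates more cleanly why normality in a \emph{non-elementary} ambient group is the operative hypothesis, at the cost of needing the centralizer fact you correctly flag as the crux. Your concluding step, that $\Phi\cap\Theta$ is a non-trivial normal subgroup of $\Gamma$ and hence has full limit set, matches the paper's (which applies Lemma \ref{limit set properties}(3) to $\Phi\cap\Theta$ as a normal subgroup of $\Phi$ and of $\Theta$ rather than of $\Gamma$ --- an immaterial difference).
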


\begin{proof} Since $\Phi$ and $\Theta$ are non-elementary, there exist loxodromic elements $\varphi\in\Phi$ and $\theta\in\Theta$.  Since $\Theta$ is normal in $\Gamma$, there exists some $\theta_0\in\Theta$ so that $\varphi\theta\varphi^{-1} = \theta_0$, from which we see that $\varphi\theta = \theta_0\varphi$; similarly, since $\Phi$ is normal in $\Gamma$, there exists some $\varphi_0\in\Phi$ so that $\theta^{-1}\varphi\theta = \varphi_0$, from which we see that $\varphi\theta = \theta\varphi_0$.  

Therefore, we have that $\theta_0\varphi= \theta\varphi_0$, and hence that $\theta^{-1}\theta_0 = \varphi_0\varphi^{-1}$.  If $\theta^{-1}\theta_0$ is non-trivial, then we see immediately that $\Phi\cap\Theta$ is non-empty, as it contains the non-trivial element $\theta^{-1}\theta_0 = \varphi_0\varphi^{-1}$ of $\Gamma$. 

If $\theta^{-1}\theta_0$ is trivial, then we have that $\varphi\theta = \theta\varphi$.  Since $\theta$ and $\varphi$ are loxodromic, this implies that there exists some (primitive) loxodromic element $\gamma\in \Gamma$ and non-zero integers $n_\theta$ and $n_\varphi$ so that $\theta = \gamma^{n_\theta}$ and $\varphi = \gamma^{n_\varphi}$.  Again, we have that $\Phi\cap\Theta$ is non-trivial, as this intersection then contains $\gamma^{n_\varphi n_\theta}$.  

In either case, the intersection $\Phi\cap\Theta$ is then a non-trivial normal subgroup of both $\Phi$ and $\Theta$, and consequently we have by Lemma \ref{limit set properties} that $\Lambda(\Phi\cap\Theta) =\Lambda(\Phi) =\Lambda(\Theta)$.  {\hfill ${\bf QED}$ \\ \vspace{.15cm}}
\end{proof}

We close this noting that the results in this Section hold in full generality, for all Kleinian groups in all dimensions $n\ge 2$.   

\section{Two examples}
\label{two examples}

In this Section, we present two examples to demonstrate the sharpness of Conjecture 0, as discussed in the previous Section.  We begin with an example to show that we cannot expect to have the strongest result we might hope for, that $\Lambda(\Phi\cap\Theta) = \Lambda(\Phi)\cap \Lambda(\Theta)$, for non-elementary subgroups $\Phi$ and $\Theta$ of even a Fuchsian group $\Gamma$. 

\begin{proposition} For every finitely generated purely loxodromic Fuchsian group $\Gamma$ with $\Lambda(\Gamma) ={\mathbb S}^1_\infty$, there exist non-elementary subgroups $\Phi$ and $\Theta$ of $\Gamma$ so that $\Phi\cap \Theta$ is trivial and $\Lambda(\Phi)\cap \Lambda(\Theta)$ contains exactly two points.  In particular, we have that $\Lambda(\Phi\cap \Theta)\ne \Lambda(\Phi)\cap \Lambda(\Theta)$ for this $\Phi$ and $\Theta$. 
\label{fuchsian most general counterexample}
\end{proposition}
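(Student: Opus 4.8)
The plan is to realize the two shared limit points as the fixed point pair ${\rm fix}(\gamma) = \{p^+, p^-\}$ of a single loxodromic element $\gamma \in \Gamma$, and to arrange that $\Phi$ and $\Theta$ each have the full limit set of a rank-two free group containing $\gamma$, while meeting $\langle\gamma\rangle$ trivially. Corollary \ref{cyclic corollary} dictates the shape of any such example: if $p^+$ (say) were a conical limit point of both $\Phi$ and $\Theta$, then since $\gamma$ fixes $p^+$ we would obtain powers $\gamma^m\in\Phi$ and $\gamma^{m'}\in\Theta$, whence $\gamma^{mm'}\in\Phi\cap\Theta$, contradicting triviality of the intersection. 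So the shared points must be non-conical for the groups, and in particular $\Phi$ and $\Theta$ cannot be finitely generated (a finitely generated purely loxodromic Fuchsian group is convex co-compact, with $\Lambda = \Lambda_c$). This points toward building $\Phi$ and $\Theta$ as non-trivial normal subgroups of convex co-compact groups, so that Lemma \ref{limit set properties}(3) supplies a full limit set, while a homomorphism to ${\mathbb Z}$ keeps every power of $\gamma$ out of the groups.

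Concretely, first I would produce a rank-three Schottky subgroup $S = \langle \gamma, \alpha, \beta\rangle \le \Gamma$, free and convex co-compact on these three loxodromic generators. Such an $S$ exists because $\Gamma$ is non-elementary (indeed $\Lambda(\Gamma) = {\mathbb S}^1_\infty$), so a ping-pong argument applied to sufficiently high powers of three loxodromic elements with pairwise disjoint fixed-point pairs yields it. Write $p^{\pm} = {\rm fix}(\gamma)$. Inside the free factors $\langle\alpha,\gamma\rangle$ and $\langle\beta,\gamma\rangle$ I would then set
\[ \Phi = \ker\bigl(\langle\alpha,\gamma\rangle \to {\mathbb Z}\bigr), \qquad \Theta = \ker\bigl(\langle\beta,\gamma\rangle \to {\mathbb Z}\bigr), \]
where the two homomorphisms send $\alpha \mapsto 0,\ \gamma\mapsto 1$ and $\beta\mapsto 0,\ \gamma\mapsto 1$ respectively. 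Each kernel is a non-trivial normal subgroup of its (non-elementary, rank-two free) parent and is itself non-elementary, since e.g.\ $\Phi$ contains the two loxodromics $\alpha$ and $\gamma\alpha\gamma^{-1}$ with distinct axes; crucially, $\gamma^k \mapsto k$, so neither $\Phi$ nor $\Theta$ contains any non-trivial power of $\gamma$.

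It then remains to assemble the three conclusions. By Lemma \ref{limit set properties}(3), normality gives $\Lambda(\Phi) = \Lambda(\langle\alpha,\gamma\rangle)$ and $\Lambda(\Theta) = \Lambda(\langle\beta,\gamma\rangle)$. Since $\langle\alpha,\gamma\rangle$ and $\langle\beta,\gamma\rangle$ are free factors of the free group $S$, their intersection is exactly the common free factor $\langle\gamma\rangle$: a reduced word lies in both precisely when it involves neither $\alpha$ nor $\beta$, i.e.\ when it is a power of $\gamma$. Both free factors are convex co-compact and non-elementary, so Theorem \ref{susskind swarup} applies and gives
\[ \Lambda(\Phi)\cap\Lambda(\Theta) = \Lambda(\langle\alpha,\gamma\rangle)\cap\Lambda(\langle\beta,\gamma\rangle) = \Lambda\bigl(\langle\alpha,\gamma\rangle\cap\langle\beta,\gamma\rangle\bigr) = \Lambda(\langle\gamma\rangle) = \{p^+,p^-\}, \]
exactly two points. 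Finally $\Phi\cap\Theta \subseteq \langle\alpha,\gamma\rangle\cap\langle\beta,\gamma\rangle = \langle\gamma\rangle$, and since $\Phi$ contains no non-trivial power of $\gamma$, the intersection $\Phi\cap\Theta$ is trivial; hence $\Lambda(\Phi\cap\Theta) = \emptyset \ne \Lambda(\Phi)\cap\Lambda(\Theta)$.

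The substantive inputs are the two structural facts — that normality upgrades the limit set to that of the ambient rank-two group, and that the Susskind--Swarup intersection theorem collapses $\Lambda(\Phi)\cap\Lambda(\Theta)$ onto $\Lambda(\langle\gamma\rangle)$ — both cited results once the Schottky configuration is in place. I expect the only point requiring care to be the verification that $\langle\alpha,\gamma\rangle$ and $\langle\beta,\gamma\rangle$ are genuine convex co-compact free factors of a common Schottky group, so that the free-factor intersection is exactly $\langle\gamma\rangle$ and Theorem \ref{susskind swarup} is legitimately applicable; this is routine ping-pong, but it must be arranged by taking $\gamma,\alpha,\beta$ as sufficiently high powers of elements with pairwise disjoint fixed-point pairs.
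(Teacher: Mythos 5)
Your proposal is correct, but it takes a genuinely different route from the paper. The paper's proof is intrinsic to the surface: it picks a simple closed non-separating geodesic $a$ on ${\mathbb H}^2/\Gamma$ with representing loxodromic $\gamma$ and axis $\ell$, passes to a normal subgroup $\Gamma^0$ (e.g.\ the commutator subgroup) containing no power of $\gamma$, and then runs a Maskit-combination/ping-pong analysis on the components of ${\mathbb H}^2\setminus\Gamma^0(\ell)$: the stabilizers $\Phi_0,\Phi_1$ of the two components adjacent to $\ell$ satisfy $\Gamma^0=\Phi_0\ast\Phi_1$, their limit sets lie in the two complementary closed arcs determined by ${\rm fix}(\gamma)$, and hence meet in exactly ${\rm fix}(\gamma)$ while the subgroups meet trivially. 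You instead drop down to a rank-three Schottky subgroup $\langle\gamma,\alpha,\beta\rangle$, take $\Phi,\Theta$ to be the kernels of the maps $\langle\alpha,\gamma\rangle\to{\mathbb Z}$ and $\langle\beta,\gamma\rangle\to{\mathbb Z}$ killing $\alpha$ (resp.\ $\beta$) and sending $\gamma\mapsto 1$, and then outsource the limit-set computation to Lemma \ref{limit set properties}(3) (normality) and Theorem \ref{susskind swarup} (Susskind--Swarup) applied to the convex co-compact free factors, whose group-theoretic intersection is exactly $\langle\gamma\rangle$. Both arguments are sound. Yours is shorter and strictly more general: it never uses $\Lambda(\Gamma)={\mathbb S}^1_\infty$ or the positive genus of the quotient, and it transplants verbatim to any non-elementary purely loxodromic Kleinian group in any dimension, since Schottky subgroups always exist -- whereas the paper's construction, as it remarks, needs an embedded codimension-one totally geodesic submanifold to generalize. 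What the paper's approach buys in exchange is self-containedness (no appeal to Theorem \ref{susskind swarup}) and a more structured example, in which $\Phi$ and $\Theta$ free-product-decompose an entire normal subgroup of $\Gamma$ and the two-point intersection is visible directly from the separation of the limit sets by the axis. Your opening observation that Corollary \ref{cyclic corollary} forces the shared points to be non-conical, so $\Phi$ and $\Theta$ cannot be finitely generated, matches the remark the paper makes immediately after its proof.
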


\begin{proof} We start by noting that the quotient surface $S ={\mathbb H}^2/\Gamma$ has positive genus, and so there exists a simple, closed, non-separating geodesic $a$ on $S$.  Let $\gamma$ be a (necessarily primitive) loxodromic element of $\Gamma$ representing $a$, so that $\ell = {\rm axis}(\gamma)$ projects to $a$; we note that $\gamma$ is determined only up to conjugacy and taking inverses in $\Gamma$.  

Let $\Gamma^0$ be a non-trivial normal subgroup of $\Gamma$ which contains no non-trivial power of $\gamma$, or phrased alternatively, so that $\gamma^n\not\in\Gamma^0$ for all $n\ge 1$.  Such a subgroup always exists; for instance, we can choose $\Gamma^0$ to be the commutator subgroup $\Gamma^0 = [\Gamma, \Gamma]$ of $\Gamma$ since $a$ was assumed to be non-separating.  

We make the following observations.  First, we have that $\ell\cup {\rm fix}(\gamma)$ is precisely invariant under the identity in $\Gamma^0$, by which we mean that $\mu(\ell\cup {\rm fix}(\gamma))$ is disjoint from $\ell\cup {\rm fix}(\gamma)$ for all non-trivial elements $\mu\in\Gamma^0$.   The precise invariance under the identity of $\ell$ follows immediately from the assumption that $a$ is a simple geodesic and the fact that no non-trivial power of $\gamma$ lies in $\Gamma^0$.  The fact that $\mu({\rm fix}(\gamma))$ is disjoint from ${\rm fix}(\gamma)$ follows from the fact that no non-trivial power of $\gamma$ lies in $\Gamma^0$, together with the discreteness of $\Gamma$.

The second is that for any sequence $\{ \mu_n \}$ of distinct elements of $\Gamma^0$, the spherical diameters (in the closed disc ${\mathbb H}^2\cup {\mathbb S}^1_\infty$) of the translates $\{ \mu_n(\ell)\}$ of $\ell$ go to $0$, and so in particular, any compact set in ${\mathbb H}^2$ can intersect only finitely many translates of $\ell$ by elements of $\Gamma^0$.  This follows from the fact that since $\ell$ projects to a closed geodesic on $S$, standard covering theory implies that the translates of $\ell$ under $\Gamma$ cannot accumulate to any point of ${\mathbb H}^2$. 

We are now able to mimic some of the ideas contained in Maskit's proof of the combination theorems for Kleinian groups.  (See Maskit \cite{maskit book}.)  As these ideas may be unfamiliar to some readers, we give a sketch of the proof here.

Consider the complement ${\mathbb H}^2 \setminus \Gamma^0 (\ell)$ of the translates of $\ell$ by elements of $\Gamma^0$ in ${\mathbb H}^2$.   The two observations given just above together imply that there exist two components $X_0$ and $X_1$ of ${\mathbb H}^2 \setminus \Gamma^0(\ell)$ which contain $\ell$ in their boundary.   Moreover, since we are working with the complement of translates of the single line $\ell$, every component of ${\mathbb H}^2 \setminus \Gamma^0(\ell)$ is equivalent under the action of $\Gamma^0$ to either $X_0$ or $X_1$.  (It may be that $X_0$ and $X_1$ are themselves equivalent under the action of $\Gamma^0$.)   Let $\Phi_k = {\rm stab}_{\Gamma^0} (X_k)$ be the stabilizer of $X_k$ in $\Gamma^0$. 

As we have made no assumptions about the relationship between $\gamma$ and $\Gamma^0$ beyond the ones made above, there are two cases.  One is that $X_0$ and $X_1$ are not equivalent under the action of $\Gamma^0$ and the other is that there exists $\varphi_0\in \Gamma^0$ satsifying $\varphi_0 (X_0) = X_1$.  

In the former case, we have that every translate of $\ell$ on the boundary of $X_k$ is the translate of $\ell$ by an element of $\Phi_k$.  (Otherwise, there exists an element $\mu$ of $\Gamma^0$ so that $\mu(\ell)$ lies in the boundary of $X_k$ but for which $\mu(X_k)\ne X_k$, and we then immediately see that $\mu(X_{1-k}) = X_k$, contrary to our assumption.) We can then argue that $\Gamma^0 = \langle \Phi_0, \Phi_1\rangle =   \Phi_0\ast \Phi_1$ by taking any element $\mu\in\Gamma^0$ and arguing by induction on the number of translates of $\ell$ separating $\ell$ from $\mu(\ell)$.   The fact that $ \langle \Phi_0, \Phi_1\rangle =   \Phi_0\ast \Phi_1$ follows from the precise invariance of $\ell$ under the identity in $\Gamma^0$, together with the standard ping-pong argument.

Label the endpoints at infinity of $\ell$ as $x$ and $y$, so that ${\rm fix}(\gamma) = \{ x,y\}$.  We next show that ${\rm fix}(\gamma)\subset \Lambda (\Phi_k)$.  Since $\Gamma^0$ is normal in $\Gamma$ and ${\rm fix}(\gamma)\subset\Lambda(\Gamma) = {\mathbb S}^1_\infty$, we see that ${\rm fix}(\gamma)\subset\Lambda(\Gamma^0)= {\mathbb S}^1_\infty$.  To show that $x\in\Lambda(\Phi_k)$, let $\{ x_n\}$ be a sequence of distinct points of $\Lambda(\Gamma^0)$ converging to $x$ and lying on the same side of $\ell$ as $X_k$; this makes sense, as the endpoints at infinity of $\ell$ separate $\Lambda(\Gamma^0) = {\mathbb S}^1_\infty$.  For each $n$, there exists $\varphi_n\in \Phi_k$ so that the translate $\varphi_n (\ell)$ of $\ell$ lies in the boundary of $X_k$ and separates $X_k$ from $x_n$.  Moreover, since the endpoints at infinity of translates of $\ell$ are disjoint, we can pass to a subsequence so that the $\varphi_n$ are distinct.  Since the $\varphi_n (\ell)$ converge to $x$, we have that $x\in\Lambda(\Phi_k)$.  (The same argument holds for $y$.)

From the construction, it is evident that $\Lambda(\Phi_0)$ and $\Lambda(\Phi_1)$ lie in the different closed arcs in ${\mathbb S}^1_\infty$ determined by ${\rm fix}(\gamma) = \{ x, y\}$.  Hence, we have that ${\rm fix}(\gamma) =\Lambda(\Phi_1)\cap\Lambda(\Phi_2)$, even though we have that $\Phi_1\cap\Phi_2$ is trivial.

In the latter case, a similar argument shows that $\{ x,y\} = \Lambda(\Phi_0)\cap \Lambda(\varphi_0 \Phi_0 \varphi_0^{-1}) =  \Lambda(\Phi_0)\cap \Lambda(\Phi_1)$.  
{\hfill ${\bf QED}$ \\ \vspace{.15cm}}
\end{proof}

An immediate consequence of this construction is that the points of ${\rm fix}(\gamma)$ cannot be points of $\Lambda_c (\Phi_0)$: if on the contrary there were a point $x \in {\rm fix}(\gamma)\cap\Lambda_c (\Phi_0)$, say, then the discreteness of $\langle \Phi_0,\gamma\rangle\subset \Gamma$ and Corollary \ref{cyclic corollary} would together immediately imply that $\gamma^n\in \Phi_0$ for some $n\ge 1$, which is contrary to the construction. 

We further note that the same proof works in all dimensions, though finding groups that satisfy the initial hypotheses becomes increasingly more difficult as the dimension increases.  At a minimum, we would need a Kleinian group $\Gamma$ acting on ${\mathbb H}^n$ so that ${\mathbb H}^n/\Gamma$ contains a closed embedded codimension-$1$ totally geodesic submanifold $S$ with $\pi_1(S) =\Phi \subset \Gamma$ for which there exists an infinite index non-trivial normal subgroup $\Gamma^0\subset \Gamma$ for which $\Gamma^0\cap\Phi$ is trivial. 

\medskip

In our second example, we make use of the following Lemma, whose proof is an easy exercise in hyperbolic geometry, which appears as Lemma 2.1 of Bishop \cite{bishop bm theorem}.  

\begin{lemma} Given $\theta >0$, there exist constants $C = C(\theta)$, $M = M(\theta)>0$ so that the following is true.  Suppose $\gamma$ is a piecewise geodesic path from $a\in {\mathbb H}^2$ to $b\in {\mathbb H}^2$, that is, $\gamma =\cup_{j=1}^n \gamma_j\subset {\mathbb H}^2$ is a union of disjoint (except for endpoints) geodesics arcs, each of hyperbolic length at least $M$ and such that $\gamma_j$ and $\gamma_{j+1}$ meet at an angle $\ge \theta$.  Then $\gamma$ is within hyperbolic distance $C$ of the geodesic arc connecting $a$ and $b$.  In particular, a (one sided) infinite path with this property is within hyperbolic distance $C$ of an infinite geodesic, hence terminates at a single point of ${\mathbb S}^2_\infty$ and approaches that point inside some non-tangential cone.
\label{piecewise quasigeodesic}
\end{lemma}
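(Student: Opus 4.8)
The plan is to recognise the path $\gamma$ as a quasigeodesic and then to invoke the stability of quasigeodesics in the negatively curved plane. Since ${\mathbb H}^2$ is $\delta$-hyperbolic for a universal $\delta$, the Morse lemma provides, for each pair $(\lambda,\epsilon)$, a constant $C=C(\lambda,\epsilon,\delta)$ so that every $(\lambda,\epsilon)$-quasigeodesic lies within Hausdorff distance $C$ of the geodesic joining its endpoints. The entire content of the lemma is therefore to show that, once $M$ is chosen large enough in terms of $\theta$, the arclength parametrization of $\gamma$ is a $(\lambda,\epsilon)$-quasigeodesic with $\lambda,\epsilon$ depending only on $\theta$; the resulting $C$ then depends only on $\theta$, as asserted.

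The first ingredient is a single-vertex estimate, which is pure hyperbolic trigonometry. For points $x,v,y$ with interior angle $\alpha=\angle_v(x,y)\ge\theta$ and $d(x,v),d(v,y)\ge M$, the hyperbolic law of cosines
\[ \cosh d(x,y)=\cosh d(x,v)\cosh d(v,y)-\sinh d(x,v)\sinh d(v,y)\cos\alpha \]
shows, on bounding the right-hand side from below using $\cos\alpha\le\cos\theta$ and $\cosh t\approx\sinh t\approx\frac{1}{2}e^{t}$, that the additive defect is uniformly bounded:
\[ d(x,v)+d(v,y)-d(x,y)\le\kappa(\theta):=\log\frac{2}{1-\cos\theta}. \]
Thus a single controlled turn between two long legs costs at most a bounded amount (and the same estimate places $v$ within a constant of the geodesic $[x,y]$). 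Note $\kappa(\theta)\to\infty$ as $\theta\to0$, which already signals why the threshold $M$ must grow as $\theta$ shrinks.

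The substance of the proof is the local-to-global step: propagating this one-vertex bound along the whole chain $p_0,p_1,\dots,p_n$ of vertices so that the defects do not accumulate. I would argue by induction on $k$, carrying along the angle $\psi_k=\angle_{p_k}(p_0,p_{k-1})$ between the geodesic shortcut from $p_k$ back to the initial point and the reversed final leg. Because each leg has length at least $M$, the exponential divergence of geodesics in ${\mathbb H}^2$ forces this shortcut to arrive at $p_k$ very nearly anti-parallel to $\gamma_k$; choosing $M=M(\theta)$ large enough that this reconvergence outpaces the per-vertex perturbation, one keeps $\psi_k\le\theta/2$. Then the angle at $p_k$ between the shortcut $[p_k,p_0]$ and the next leg $[p_k,p_{k+1}]$ is at least $\theta-\psi_k\ge\theta/2$, so the single-vertex estimate applies to the triangle $(p_0,p_k,p_{k+1})$ and yields $d(p_0,p_{k+1})\ge d(p_0,p_k)+|\gamma_{k+1}|-\kappa_1$, with $\kappa_1=\kappa(\theta/2)$ depending only on $\theta$. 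Summing and using $n\le L/M$, where $L=\sum_j|\gamma_j|$, gives $d(p_0,p_n)\ge(1-\kappa_1/M)\,L$, a genuine linear lower bound once $M>\kappa_1$; applying the same reasoning to every subpath upgrades this to the full quasigeodesic inequality.

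I expect this propagation step to be the main obstacle, since it is exactly where negative curvature must be used quantitatively: one must verify that the angular error $\psi_k$ stays bounded over arbitrarily many vertices rather than drifting toward $\pi$ (which would signal the path doubling back), and this forces the delicate balance $M=M(\theta)$ against a small $\theta$. Once the quasigeodesic property holds with constants depending only on $\theta$, the finite statement is immediate from the Morse lemma. For the one-sided infinite path I would exhaust $\gamma$ by its finite initial subpaths: the uniform bound $C$ makes the geodesics $[p_0,p_n]$ pass within $C$ of every earlier vertex, so their initial directions form a Cauchy family and converge to a geodesic ray $\rho$ terminating at a single point $\xi\in{\mathbb S}^1_\infty$. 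Since $\gamma$ then lies in the $C$-neighbourhood of $\rho$, it converges to $\xi$ and does so within a bounded hyperbolic tube about the radial geodesic, which is precisely the assertion that it approaches $\xi$ inside a non-tangential cone.
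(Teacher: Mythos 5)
The paper does not actually prove Lemma \ref{piecewise quasigeodesic}: it is imported verbatim as Lemma 2.1 of Bishop \cite{bishop bm theorem} and described there as an easy exercise in hyperbolic geometry, so there is no in-paper argument to compare yours against. Your proposal is nonetheless a correct route, and it is the standard one (and close in spirit to Bishop's own): a one-vertex defect bound from the hyperbolic law of cosines, an inductive local-to-global step showing the vertex chain is a quasigeodesic with constants depending only on $\theta$, and then stability of quasigeodesics plus a limiting argument for the one-sided infinite case. The single-vertex computation checks out, up to harmless adjustments (the constant comes out as $\log\frac{4}{1-\cos\theta}$ rather than $\log\frac{2}{1-\cos\theta}$ if one uses $\cosh t\ge e^t/2$ on both factors, and for obtuse $\theta$ one simply drops the cosine term). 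The one assertion you leave unproved is the crux, namely that the return angle $\psi_{k+1}$ stays below $\theta/2$; it does close, and the cleanest way is the hyperbolic law of sines in the triangle $(p_0,p_k,p_{k+1})$, which gives $\sin\psi_{k+1}=\sin\bigl(\angle_{p_k}(p_0,p_{k+1})\bigr)\,\sinh d(p_0,p_k)/\sinh d(p_0,p_{k+1})\le 2e^{-(M-\kappa_1)}$ once the defect estimate yields $d(p_0,p_{k+1})\ge d(p_0,p_k)+M-\kappa_1$; since the angle sum forces $\psi_{k+1}<\pi-\theta/2$, this pins $\psi_{k+1}\le\theta/2$ as soon as $M$ is large compared with $\kappa_1$ and $\log(1/\theta)$. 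With that supplied, the summation $d(p_0,p_n)\ge(1-\kappa_1/M)L$, the upgrade to all subpaths (with a small remark that a subpath may begin or end partway along a leg, which only perturbs the additive quasigeodesic constant), the Morse lemma, and the convergence of the rays $[p_0,p_n]$ to a limiting geodesic are all in order, and the bounded-neighbourhood conclusion is exactly the non-tangential approach claimed.
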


\begin{proposition} There exists a non-elementary purely loxodromic Fuchsian group $\Gamma$ and non-trivial subgroups $\Phi$ and $\Theta$ of $\Gamma$ so that $\Lambda(\Phi\cap\Theta) \setminus \Lambda_c(\Phi)\cap\Lambda_c(\Theta)$ is non-empty.
\label{grid example}
\end{proposition}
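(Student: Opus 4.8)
The plan is to realise the ``grid'' of the title as the $\mathbb{Z}^2$ obtained by abelianising a rank-two Schottky group, and to exploit the fact that the kernel of that abelianisation is a non-trivial normal subgroup, whose limit set is therefore all of $\Lambda(\Gamma)$. First I would take $\Gamma=\langle a,b\rangle$ to be a purely loxodromic Fuchsian Schottky group of rank two; such a $\Gamma$ is free and convex co-compact, so $\Lambda_c(\Gamma)=\Lambda(\Gamma)$ and in particular every limit point is conical \emph{for the ambient group}. This means that all of the non-conicality below must be manufactured by passing to subgroups. Let $\epsilon_a,\epsilon_b\colon\Gamma\to\mathbb{Z}$ be the two exponent-sum homomorphisms and set $\Phi=\ker\epsilon_a$ and $\Theta=\ker\epsilon_b$. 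Both are infinitely generated non-trivial normal subgroups, and $\Phi\cap\Theta=\ker\epsilon_a\cap\ker\epsilon_b=[\Gamma,\Gamma]$ is the kernel of the abelianisation $\Gamma\to\mathbb{Z}^2$, so that $\mathbb{H}^2/(\Phi\cap\Theta)$ is exactly the $\mathbb{Z}^2$-cover (the ``grid'') of the compact-core surface $\mathbb{H}^2/\Gamma$.

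Next I would locate the limit point. Since $[\Gamma,\Gamma]$ is a non-trivial normal subgroup of $\Gamma$, part (3) of Lemma \ref{limit set properties} gives $\Lambda(\Phi\cap\Theta)=\Lambda([\Gamma,\Gamma])=\Lambda(\Gamma)$; thus \emph{every} limit point of $\Gamma$ lies in $\Lambda(\Phi\cap\Theta)$, and it suffices to produce one such point that fails to be conical for $\Phi$. The cleanest choice is $x={\rm fix}^+(a)$. On the one hand $a\in\ker\epsilon_b=\Theta$, so $x$ is even a uniform conical limit point of $\Theta$; on the other hand $\epsilon_a(a^m)=m\neq 0$ for every $m\ge 1$, so \emph{no} power of $a$ lies in $\Phi=\ker\epsilon_a$, and the contrapositive of Corollary \ref{cyclic corollary} (applied to $\gamma=a$ fixing $x$) forces $x\notin\Lambda_c(\Phi)$. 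Hence $x\in\Lambda(\Phi\cap\Theta)$ while $x\notin\Lambda_c(\Phi)\cap\Lambda_c(\Theta)$, which already establishes Proposition \ref{grid example}.

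To make the example robust, and to bring Lemma \ref{piecewise quasigeodesic} into play, I would then replace the fixed point $x$ by a point whose defining geodesic escapes compacta in \emph{both} coordinate directions of the grid, so that $x$ is non-conical for $\Phi$ and for $\Theta$ simultaneously and is not the fixed point of any element of $\Gamma$. For this I would build a one-sided ``staircase'' in the $\mathbb{Z}^2$-cover $\mathbb{H}^2/[\Gamma,\Gamma]$ running out to the corner, with consecutive steps taken along translates of ${\rm axis}(a)$ and ${\rm axis}(b)$ of rapidly increasing length. Lifting to $\mathbb{H}^2$ gives a piecewise geodesic whose segments are long and meet at angles bounded below by the Schottky geometry; Lemma \ref{piecewise quasigeodesic} then certifies that this path is a quasigeodesic converging to a single $x\in\mathbb{S}^1_\infty$ which it approaches non-tangentially. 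Consequently the genuine geodesic ray to $x$ fellow-travels the staircase at bounded distance, so its $\epsilon_a$- and $\epsilon_b$-coordinates both tend to $+\infty$, i.e.\ its projections to $\mathbb{H}^2/\Phi$ and to $\mathbb{H}^2/\Theta$ each leave every compact set; thus $x\notin\Lambda_c(\Phi)\cup\Lambda_c(\Theta)$, while normality again gives $x\in\Lambda(\Phi\cap\Theta)$.

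The main obstacle is the non-conicality verification, that is, passing from ``the approximating data escape in the $\mathbb{Z}^2$-coordinates'' to ``the true geodesic ray to $x$ leaves every compact subset of the quotient.'' For the fixed-point choice this is free, being exactly the content of Corollary \ref{cyclic corollary}; the real work lives in the staircase version, where one must check the length-and-angle hypotheses of Lemma \ref{piecewise quasigeodesic} (both controlled by the fixed Schottky data) so that the explicit piecewise geodesic stays a bounded distance from the true ray, and then argue that the unbounded excursions genuinely prevent any compact set from being revisited. The a priori worry that the infinitely generated group $\Phi$ might supply ``shortcut'' returns to a compact set is handled by the bounded fellow-travelling together with the exponent-sum bookkeeping, since any such return would force $\epsilon_a$ to remain bounded along the ray.
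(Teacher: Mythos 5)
Your first argument (taking $x$ to be a fixed point of $a$ and invoking Corollary \ref{cyclic corollary}) is internally sound and does produce a point of $\Lambda(\Phi\cap\Theta)$ lying outside $\Lambda_c(\Phi)\cap\Lambda_c(\Theta)$, so it verifies the proposition under the most literal parsing of the displayed set. But it does not prove what the paper's proof actually establishes, and what Section \ref{section susskind conjecture} announces this example is for: a point lying \emph{in} $\Lambda_c(\Phi)\cap\Lambda_c(\Theta)$ but \emph{outside} $\Lambda_c(\Phi\cap\Theta)$, which is what rules out strengthening the conclusion of Conjecture 0 from $\Lambda(\Phi\cap\Theta)$ to $\Lambda_c(\Phi\cap\Theta)$. (Your fixed-point $x$ is essentially the phenomenon already exhibited in Proposition \ref{fuchsian most general counterexample}, repackaged through Corollary \ref{cyclic corollary}.) Your staircase version moves further from the target: by sending the path monotonically to the corner $(+\infty,+\infty)$ of the grid you force \emph{both} exponent sums to infinity, so the resulting $x$ fails to be conical for $\Phi$, for $\Theta$, and for $\Phi\cap\Theta$ alike, and again only witnesses non-conicality for one of the two subgroups. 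The paper instead runs an oscillating \emph{spiral} through $(0,0),(0,1),(-1,1),(-1,-1),(1,-1),(1,2),\dots$, so that each individual coordinate returns to $0$ infinitely often while $\max(|n|,|m|)\to\infty$. Consequently the projection to each single-variable cover $S_A=S/\langle X_A\rangle$ and $S_B=S/\langle X_B\rangle$ re-enters a fixed compact set infinitely often (giving $x\in\Lambda_c(\Xi_A)\cap\Lambda_c(\Xi_B)$), while the projection to the $\mathbb{Z}\oplus\mathbb{Z}$-cover $S$ leaves every compact set (giving $x\notin\Lambda_c(\Xi_A\cap\Xi_B)$). That oscillation is the entire point of the example and is absent from your construction.

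There is also a concrete technical gap in the staircase itself: in a Schottky group $\langle a,b\rangle$ the axes of $a$ and $b$ and their translates are typically pairwise disjoint, so ``consecutive steps taken along translates of ${\rm axis}(a)$ and ${\rm axis}(b)$'' do not concatenate into a connected piecewise geodesic, and Lemma \ref{piecewise quasigeodesic} has nothing to apply to. The paper engineers the needed connectivity by starting from a figure-eight curve $c$ on a genus-two surface whose two lobes $a$ and $b$ meet transversally at a single point $z_0$ at angle $\pi/2$, each of length exceeding $M(\pi/2)$; the lifts of $a$ and $b$ and their translates then genuinely share endpoints and meet at a controlled angle. To repair your argument you should adopt that set-up (or insert bounded connecting arcs and re-verify the length and angle hypotheses), and, more importantly, replace the monotone staircase by the oscillating spiral so that the point you produce is actually conical for each of $\Phi$ and $\Theta$ separately.
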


\begin{proof} Let $S_0$ be a closed orientable surface of genus two.  Let $c$ be the figure-eight curve on $S_0$ going once around each handle of $S_0$, and let $z_0$ be the self-intersection point of $c$.   Label the two simple closed non-separating curves determined by $c$ as $a$ and $b$, and note that these are not geodesics as they both have a corner at $z_0$.  Without loss of generality, we may assume that $S_0$ is equipped with a hyperbolic metric so that the angle of intersection of $c$ at $z_0$ is $\frac{\pi}{2}$ and both $a$ and $b$ have lengths significantly greater than the constant $M = M(\frac{\pi}{2})$ coming from Lemma \ref{piecewise quasigeodesic}.   Let $\alpha$ and $\beta$ be the simple closed non-separating geodesics in the free homotopy classes of $a$ and $b$, respectively.  Let $\Gamma_0$ be a Fuchsian group so that $S_0 = {\mathbb H}^2/\Gamma_0$.

Let $S$ be the ${\mathbb Z}\oplus {\mathbb Z}$ cover of $S_0$ corresponding to $\alpha$ and $\beta$, so that $S$ is an infinite genus surface on which ${\mathbb Z}\oplus {\mathbb Z}$ acts freely with fundamental domain homeomorphic to $S_0 \setminus (\alpha\cup \beta)$ and with quotient $S/{\mathbb Z}\oplus {\mathbb Z} = S_0$.  Let $\pi: S\rightarrow S_0$ be the covering map.   We can visualize $S$ as a regular neighborhood in ${\mathbb R}^3$ of the standard square lattice $L$ in the $xy$-plane.

Fix a point $w_0$ in $\pi^{-1} (z_0)$, which we will use as a basepoint.  Let $A$ be a lift of $a$ to $S$ starting at $w_0$ and $B$ a lift of $b$ to $S$ starting at $w_0$. (If we chose an orientation on $c$, this induces an orientation on both $a$ and $b$ and this would then allow us to choose unique lifts $A$ and $B$ of $a$ and $b$, respectively.)   Let $X_A$ be the translation acting on $S$ and taking $w_0$ to the other endpoint of $A$, and let $X_B$ be the translation acting on $S$ and taking $w_0$ to the other endpoint of $B$.   Note that we can explicitly identify the ${\mathbb Z}\oplus {\mathbb Z}$ group of translations acting on $S$ with $G = \langle X_A, X_B\rangle$, so that $S/G = S_0$.   

There are two intermediate surfaces of interest, namely $S_A = S/\langle X_A\rangle$ and $S_B = S/\langle X_B\rangle$.   Moreover, there exist (necessarily normal) subgroups $\Xi$, $\Xi_A$ and $\Xi_B$ of $\Gamma_0$ so that $S = {\mathbb H}^2/\Xi$, $S_A = {\mathbb H}^2/\Xi_A$ and $S_B = {\mathbb H}^2/\Xi_B$, respectively.   In particular, we have the inclusions $\Xi\subset \Xi_A\subset \Gamma_0$ and $\Xi\subset \Xi_B\subset \Gamma_0$, as well as the relation $\Xi = \Xi_A\cap \Xi_B$. 

The points in $\pi^{-1} (z_0)$ are the translates of $w_0$ by elements of $G$, so that $\pi^{-1} (z_0) = \{ X_A^n \cdot X_B^m (w_0)\: |\:n, m\in {\mathbb Z} \}$.   Using this labelling, we will construct a piecewise geodesic ray from the lifts $A$ and $B$ of the lobes $a$ and $b$ of $c$ to $S$.  

Let $w$ be a piecewise geodesic ray on $S$ constructed from the translates of $A$ and $B$ that starts from $w_0$ and spirals out.  If we wish to be specific, consider the piecewise geodesic $w$ constructed as follows: 
\begin{itemize}
\item We start by taking $B$, which joins $w_0 \sim (0,0)$ to $X_B (w_0) \sim (0,1)$;
\item to this, we adjoin $X_A^{-1}(A)$, which joins $X_B (w_0) \sim (0,1)$ to $X_A^{-1}\cdot X_B (w_0)\sim (-1,1)$;
\item to this, we adjoin the two translates of $B$ joining $X_A^{-1}\cdot X_B (w_0)\sim (-1,1)$ to $X_A^{-1}\cdot X_B^{-1} (w_0)\sim (-1,-1)$;
\item to this, we adjoin the two translates of $A$ joining $X_A^{-1}\cdot X_B^{-1} (w_0)\sim (-1,-1)$ to $X_A\cdot X_B^{-1} (w_0)\sim (1,-1)$;
\item to this, we adjoin the three translates of $B$ joining $X_A\cdot X_B^{-1} (w_0)\sim (1,-1)$ to $X_A\cdot X_B^2 (w_0)\sim (1,2)$;
\item to this, we adjoin the three translates of $A$ joining $X_A\cdot X_B^2 (w_0)\sim (1,2)$ to $X_A^{-2}\cdot X_B^2 (w_0)\sim (-2,2)$,
\end{itemize}
and we continue on in this fashion.  

Lifting $w$ to ${\mathbb H}^2$ and calling the lift $W$, we may use Lemma \ref{piecewise quasigeodesic} to see that there exists a geodesic ray $r$ and a constant $C$ so that $W$ lies in the $C$-neighborhood of $r$.   Let $x$ be the endpoint at infinity of $W$, or equivalently of $r$.  Since the projection of $W$ (or equivalently, of $r$) to $S$ exits every compact subset of $S$, we have that $x\in \Lambda(\Xi) \setminus \Lambda_c (\Xi)$.   

However, projecting $W$ (or equivalently, projecting $r$) to $S_A$, we see that $W$ exits both ends of $S_A$.   In particular, there exists a compact set $L$ in $S_A$ and a sequence $\{ x_n\}\subset {\mathbb H}^2$ of points in $W$ converging to $x$ (in the Euclidean topology on ${\mathbb H}^2\cup {\mathbb S}^1_\infty$) so that the $x_n$ project to $L$.  This is enough to conclude that $x\in \Lambda_c (\Xi_A)$.  We may use the same argument on $S_B$ without any change and hence conclude that $x\in \Lambda_c (\Xi_B)$ as well.  

Setting $\Phi = \Xi_A$ and $\Theta = \Xi_B$, we have completed the proof of the Proposition.  {\hfill ${\bf QED}$ \\ \vspace{.15cm}}
\end{proof}

{\footnotesize

}

\end{document}